\newtheorem{thm}{Theorem}
\newtheorem{Lem}{Lemma}[section]
\newtheorem{obs}{Observation}[section]
\definecolor{LightBlue}{RGB}{240, 245, 255}
\definecolor{LightOrange}{RGB}{255, 245, 240}
\definecolor{LightGreen}{RGB}{240, 255, 250}
\definecolor{LightYellow}{RGB}{255, 255, 240}
\makeatletter \@addtoreset{equation}{section} \makeatother
\title{\bf Reliability evaluation of Cayley graph generated by unicyclic graphs based on cyclic fault pattern}
\author{
Ting Tian$^{1}$,\ \ Shumin Zhang$^{1,2,3,}$\thanks{Corresponding author. \newline\indent{\emph{~E-mail}: {\tt tianting1201@163.com} (T.\ Tian), {\tt zhangshumin@qhnu.edu.cn} (S.\ Zhang), {\tt zhuboqh@163.com} (B.\ Zhu)}
}\ ,\ \ Bo Zhu$^{1}$ \\ \\ [0.2cm]
\small $^{1}$School of Mathematics and Statistics, Qinghai Normal University, Xining, Qinghai 810008, China\\[0.2cm]
\small $^{2}$Academy of Plateau Science and Sustainability, People's Government of Qinghai Province and\\
\small Beijing Normal University, Xining, Qinghai 810008, China\\[0.2cm]
\small $^3$The State Key Laboratory of Tibetan Intelligent Information Processing and Application,\\
\small Xining, Qinghai 810008, China\\[0.2cm]
\date{}
}
\begin{document}

\maketitle
\baselineskip=0.23in


\begin{abstract}

Graph connectivity serves as a fundamental metric for evaluating the reliability and fault tolerance of interconnection networks. To more precisely characterize network robustness, the concept of cyclic connectivity has been introduced, requiring that there are at least two components containing cycles after removing the vertex set. This property ensures the preservation of essential cyclic communication structures under faulty conditions.
Cayley graphs exhibit several ideal properties for interconnection networks, which permits identical routing protocols at all vertices, facilitates recursive constructions, and ensures operational robustness. In this paper, we investigate the cyclic connectivity of Cayley graphs generated by unicyclic triangle free graphs. Given an symmetric group $Sym(n)$ on $\left\{ 1,2,\dots,n\right\}$ and a set $\mathcal{T}$ of transpositions of $Sym(n)$. Let $G(\mathcal{T})$ be the graph on vertex set $\left\{ 1,2,\dots,n\right\}$ and edge set $\left\{ij\colon(ij)\in \mathcal{T}\right\}$. If $G(\mathcal{T})$ is a unicyclic triangle free graphs, then denoted the Cayley graph Cay$(Sym(n),\mathcal{T})$ by $UG_{n}$. As a result, we determine the exact value of cyclic connectivity of $UG_{n}$ as $\kappa_{c}(UG_{n})=4n-8$ for $n\ge 4 $.

\bigskip
\noindent
{\it Keywords:} Interconnection networks; Cyclic connectivity; Cayley graphs; Unicyclic triangle free graphs; Fault tolerance

\end{abstract}

\newpage

\parskip 4pt
\baselineskip=0.22in
\section{Introduction}

The topological structure of interconnection networks is conventionally modeled using graphs, where vertices represent processing nodes and edges denote communication links among vertices. To quantitatively evaluate reliability and fault tolerance of networks, various graph-theoretic parameters have been developed, among which connectivity measures play a fundamental role in assessing network robustness. Classical connectivity is the minimum cardinality of vertex sets whose removal leaves the remaining graph disconnected or trivial. However, this conventional approach assumes the unrealistic scenario where all neighbors of a vertex fail simultaneously, which is a worst case measure and thereby underestimating the practical resilience of most interconnection networks.

To address this limitation, Harary~\cite{1} introduced conditional connectivity, which requires that every surviving vertex maintains at least one faut-free neighbor after the vertex removal. This refined concept overcomes the constraints of classical connectivity and provides a more accurate characterization of fault tolerance in real-world settings. In many critical applications, it is essential that surviving network components not only remain connected but also preserve specific structural properties. This requirement has motivated the development of specialized connectivity measures under various constraints. Notable among these are $g$-good-neighbor connectivity, $h$-extra connectivity, and $r$-component connectivity, which have attracted substantial research interest~\cite{2,3,4,5,6,7,8,9,10,11,12}. The progression from classical connectivity to conditional connectivity parameters represents a significant advancement in network reliability analysis, enabling more nuanced and practical assessment of fault tolerance in complex interconnection systems. These refined metrics provide valuable insights for designing robust network architectures capable of maintaining operational integrity under partial failure conditions.

The removal of any single vertex from a cycle does not break the connection between the vertices in that cycle. By providing built-in path redundancy, cycles ensure continuous data routing in communication systems and computational clusters when individual vertices fail. This inherent structural resilience allows damaged networks to maintain operational continuity through dynamic path reconfiguration, which demonstrates their practical value in real-world networks. Robertson~\cite{13} was the first to introduce the concept of cyclic connectivity, the \emph{cyclic connectivity} of $G$, denoted by $\kappa_{c}(G)$, is defined as the smallest cardinality of a vertex subset $S$ of a graph $G$ such that the graph obtained by removing $S$ from $G$ (denoted $G-S$) is disconnected and has at least two components with cycles. Recent findings related to cyclic connectivity can be found in references~\cite{14,15,16,17,18,19}. The study of such cycle-rich residual graphs provides actionable strategies for designing cost-effective fault-tolerant architectures, particularly for applications requiring uninterrupted service in industrial control and distributed computing environments.

Cayley graphs exhibit several ideal properties for interconnection networks, which permits identical routing protocols at all vertices, facilitates recursive constructions, and ensures operational robustness. In this paper, we study the cyclic connectivity of Cayley graph generated by unicyclic triangle free graphs. Then, we determine the exact value of cyclic connectivity of $UG_{n}$ as $\kappa_{c}(UG_{n})=4n-8$ for ~$n\ge 4 $.

\section{Preliminaries}
The underlying topology of an interconnection network can be modeled as a simple, undirected, connected graph $G=(V(G),E(G))$, where $V(G)$ and $E(G)$ denote the vertex set and edge set of $G$, respectively. For a vertex $v\in V(G)$, the \emph{neighbor set} of $v$ in $G$ is defined as $N_{G}(v)=\{u\in V(G)\colon\, uv\in E(G)\}$. For any two vertices $u,v\in V(G)$, we use $cn(u,v)$ to denote the number of common neignbors between $u$ and $v$. The \emph{degree} of $v$, denoted $d_{G}(v)$, is the number of the vertices adjacent to $v$, i.e., $d_{G}(v)=|N_{G}(v)|$. Especially, $\delta(G)=min\left \{ d_{G}(v): v\in V(G) \right \} $, an isolated vertex of $G$ is a vertex $u$ with $d_{G}(u)=0$, a leaf of $G$ is a vertex $u$ with $d_{G}(u)=1$.
For a subset $S\subset V(G)$, we denote $N_{G}(S)$ as the set $\bigcup_{v\in S}N_{G}(v)\setminus S$, and let $N_{G}[S]=N_{G}(S)\cup S$.
For two vertex sets $S$ and $F$, we define $S\setminus F = \{u\in S\colon u\notin F\}$. We denote $C_{n}=u_{1}u_{2}\cdots u_{n}u_{1}$ as an \emph{$n$-cycle}, $P_{n}=u_{1}u_{2}\cdots u_{n}$ as a \emph{path}  with $n-1$ edges, and $K_{1,n}=\{u; v_{1},v_{2},\dots, v_{n}\}$ as a \emph{star}, where $u$ is the center vertex connected to $n$ leaves.
We also denote $g(G)$ as the length of the shortest cycle in $G$. 
For a graph $G$, we say that $H$ is a \emph{subgraph} of $G$, denoted by $H\subseteq G$, provided that $V(H)\subseteq V(G)$ and $E(H)\subseteq E(G)$. For a subset $S\subset V(G)$, we denote $G-S$ as the subgraph obtained by removing all vertices in $S$ from $G$, and $G[S]$ as the subgraph induced by the vertices in $S$.
In a connected graph $G$, a \emph{vertex cut} is a subset $S\subset V(G)$ such that $G-S$ is disconnected. The \emph{connectivity} of $G$ is defined as the minimum cardinality among all possible vertex cuts, represented as $\kappa(G)$. A \emph{$g$-good-neighbor vertex cut} is a subset $S\subset V(G)$ such that $G-S$ is disconnected and each vertex has at least $g$ neighbor in $G-S$. The \emph{$g$-good-neighbor connectivity} of $G$ is defined as the minimum cardinality among all possible $g$-good-neighbor vertex cuts, represented as $\kappa^{g}(G)$. For any $I\subseteq V(G)$, the graph $MC(G-I)$ is the largest component in $G-I$ and $\omega(G-I)$ is  the number of componets in graph $G-I$ if $G-I$ is disconnected.
For convenience, let $[n]=\left\{ 1,2,\dots,n\right\}$ and $[i,j]=\left\{ i,i+1,\dots,j-1,j\right\}$ for $i,j\in [n]$.

Let $\Gamma$ be a group and $S$ be a subset of $\Gamma_{n}\setminus \{1_{\Gamma}\}$, where $1_{\Gamma}$ is the identity of $\Gamma$. \emph{Cayley digraph} $Cay(\Gamma, S)$ is the digraph with vertex set $\Gamma$ and arc set $\{(g, g\cdot s): g\in \Gamma, s\in S\}$. We say that arc $(g, g\cdot s)$ has label $s$. In particular, if $S^{-1}=S$, then $Cay(\Gamma, S)$ is an undirected graph, called \emph{Cayley graph}. Given an symmetric group $Sym(n)$ on $\left\{ 1,2,\dots,n\right\}$ and a set $\mathcal{T}$ of transpositions of $Sym(n)$. Let $G(\mathcal{T})$ be the graph on vertex set $[n]$ and edge set $\left\{ij\colon(ij)\in \mathcal{T}\right\}$. The graph $G(\mathcal{T})$ is called the transposition generating graph of $Cay(Sym(n), \mathcal{T})$. If $G(\mathcal{T})$ is a tree, then $Cay(Sym(n),\mathcal{T})$ is denoted by $\Gamma_{n}$. In particular,  $\Gamma_{n}$ is the star graph $S_{n}$ if $G(\mathcal{T})$ is  isomorphic to a star, $\Gamma_{n}$ is the bubble sort graph $B_{n}$ if $G(\mathcal{T})$ is  isomorphic to a path. If $G(\mathcal{T})$ is a unicyclic triangle free graph, for which the generating graph $G(\mathcal{T})$ has a unique cycle, then denoted the Cayley graph Cay$(Sym(n),\mathcal{T})$ by $UG_{n}$. Specifically, if $\Gamma_{n}$ is a cycle, then $UG_{n}$ is the modified bubble sort graph $MB_{n}$. Additionally, for more graph terminologies and notations not specified here, please refer to~\cite{15}.

\begin{figure}[ht]
\centering
\includegraphics[width=8cm, height=6cm]{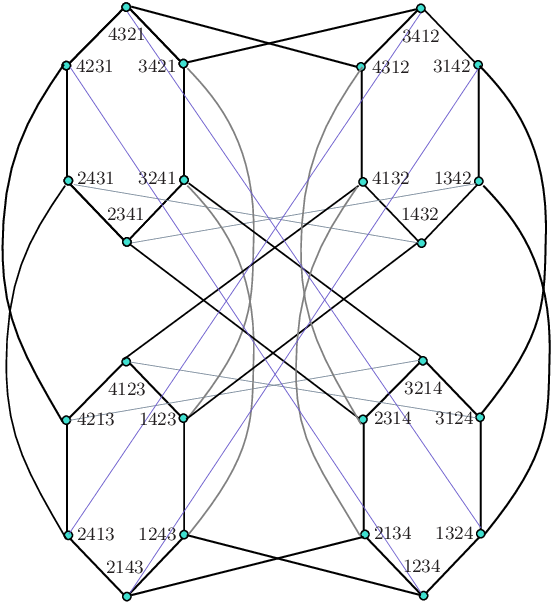}
\caption{The modified bubble-sort graph $MB_{4}$}
\label{fig1}
\end{figure}

Cayley graphs exhibit several ideal properties for interconnection networks, making them highly valuable for research. One of its key features is high fault tolerance, which ensures operational robustness.  The Cayley graph generated by unicyclic triangle free graphs $UG_{n}$, is an undirected graph with $n!$ vertices and $\frac{nn!}{2}$ edges. It is known that $UG_n$ is $n$-regular and bipartite, we proceed to describe its hierarchical structure.

First, we consider that $UG_n$ is not isomorphic to $MB_n$. In this case, $G(\mathcal{T})$ has a unique cycle and at least a leaf. Without loss of generality, assume that $n$ is a leaf of $G(\mathcal{T})$ and $k$ is the unique vertex in $G(\mathcal{T})$ that is adjacent to $n$. Let $\mathcal{T}^{'}=\mathcal{T}\setminus\{(kn)\}$. Obviously, $\mathcal{T}^{'}$ is a generating set of $Sym(n-1)$, and $G(\mathcal{T}^{'})=G(\mathcal{T})-n$ is also a unicyclic triangle free graph with vertex set $[n-1]$. For $i\in[n]$, we denote $UG_{n-1}^{i}$ as the subgraph of $UG_{n}$ induced by all vertices whose rightmost bit is $i$, $UG_{n-1}^{i}$ is an $(n-1)$-dimensional Cayley graph generated by a unicyclic triangle free graph. Consequently, graph $UG_n$ can be decomposed into $n$ vertex-disjoint subgraphs $UG_{n-1}^{i}$ with $i\in[n]$, each of which is isomorphic to $UG_{n-1}$. Let $p=p_{1}p_{2}\cdots p_{n}$ be a permutation on $[n]$ and  $p(kl)$ be a stipulate that the operation interchanging $p_k$ with $p_l$ for $k,l\in [n]$. It can be seen that for any vertex $u\in V(UG_{n-1}^{i})$ with $i\in[n]$, it has exactly one neighbor $u'=u(kn)$ outside of $UG_{n-1}^{i}$, which we refer to as the \emph{out-neighbor} of $u$. An edge that connects a vertex $u$ with its out-neighbor is called
a cross edge. Let $X,Y\subseteq V(G)$, we use $E_{G}(X,Y)$ to denote the set of edges of $G$ with one end in $X$ and the other end in $Y$. Denote $E_{i,j}(UG_{n})=E_{UG_{n}}(V(UG_{n-1}^{i}), V(UG_{n-1}^{j}))$ for $i,j\in[n]$ and $i\neq j$. For any $I\subseteq[n]$, let $UG_{n}^{I}$ be the subgraph of $UG_{n}$ induced by $ {\textstyle \bigcup_{i\in I}^{}}(V(UG_{n}^{i}))$.

Next, we consider that $UG_n$ is isomorphic to $MB_n$. In this case, $G(\mathcal{T})$ is a  $n$-cycle $C_{n}=12\cdots n$. Deleting vertex $n$ from $G(\mathcal{T})$ results in a path $P_{n-1}$, and $\mathcal{T}^{'}=\mathcal{T}\setminus\{(1n),((n-1)n)\}$. Then, $G(\mathcal{T}^{'})$ is the generating set of the $(n-1)$-dimensional bubble sort graph $B_{n-1}$. For $i\in[n]$, we denote $B_{n-1}^{i}$ as the subgraph of $MB_{n}$ induced by all vertices whose rightmost bit is $i$. Consequently, the graph $MB_n$ can be decomposed into $n$ vertex-disjoint subgraphs $B_{n-1}^{i}$ with $i\in[n]$, each of which is isomorphic to $B_{n-1}$. It can be seen that for any vertex $u\in V(B_{n-1}^{i})$ with $i\in[n]$, it has exactly two neighbors $u^{+}=u(1n)$ and $u^{-}=u((n-1)n)$ outside of $B_{n-1}^{i}$, which we refer to as the \emph{out-neighbors} of $u$.

In addition, there are the following useful properties of $UG_{n}$.

\begin{Lem}{\upshape (See~\cite{22}).}  \label{lem2-1}
Let $u$ and $v$ be any two vertices in $UG_{n}$ with $n\ge 4$. Then $cn(u,v)\le 2$.
\end{Lem}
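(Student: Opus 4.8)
The plan is to convert the assertion into a counting statement about the transposition set $\mathcal{T}$ and then exploit triangle-freeness of $G(\mathcal{T})$. Identify $V(UG_n)$ with $Sym(n)$, fix distinct vertices $u,v$, and set $g=u^{-1}v\neq 1_{Sym(n)}$. A vertex $w$ is a common neighbour of $u$ and $v$ exactly when $u^{-1}w\in\mathcal{T}$ and $v^{-1}w\in\mathcal{T}$; writing $x=u^{-1}w$ and using $v^{-1}u=g^{-1}$, this becomes $x\in\mathcal{T}$ and $g^{-1}x\in\mathcal{T}$. Since $w\mapsto u^{-1}w$ is a bijection, $cn(u,v)=|\mathcal{N}(g)|$ where $\mathcal{N}(g):=\{x\in\mathcal{T}\colon g^{-1}x\in\mathcal{T}\}$, so it suffices to show $|\mathcal{N}(g)|\le 2$.

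\emph{Restricting the cycle type of $g$.} If $x\in\mathcal{N}(g)$ then $g^{-1}=(g^{-1}x)x^{-1}$ is a product of two transpositions; as $g\neq 1_{Sym(n)}$, the cycle type of $g$ must therefore be either a single $3$-cycle or a pair of disjoint transpositions — for every other $g$ one has $\mathcal{N}(g)=\emptyset$, so there is nothing to prove. It then remains to treat these two cases by a short check of which transpositions $x$ keep $g^{-1}x$ a transposition.

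\emph{The two cases.} If $g=(abc)$, a transposition $x$ that is disjoint from $\{a,b,c\}$ or meets it in exactly one point makes $g^{-1}x$ have cycle type $3+2$ or a $4$-cycle, so $\mathcal{N}(g)\subseteq\{(ab),(bc),(ca)\}$; moreover a short computation shows $(ab)\in\mathcal{N}(g)$ forces $\{(ab),(bc)\}\subseteq\mathcal{T}$, and cyclically for the other two. Since the three pairs $\{ab,bc\},\{bc,ca\},\{ca,ab\}$ are exactly the three two-element subsets of the triangle on $\{a,b,c\}$ and $G(\mathcal{T})$ is triangle-free (hence contains at most two of these edges), at most one of the three membership conditions can hold, giving $|\mathcal{N}(g)|\le 1$. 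If $g=(ab)(cd)$ with $\{a,b\}\cap\{c,d\}=\emptyset$, then every transposition other than $(ab)$ and $(cd)$ makes $g^{-1}x$ a $4$-cycle, a $3+2$-element, or a $2+2+2$-element, so $\mathcal{N}(g)\subseteq\{(ab),(cd)\}$ and $|\mathcal{N}(g)|\le 2$. Combining the two cases yields $cn(u,v)\le 2$.

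The only real computation is the cycle-type bookkeeping in the two cases, which is routine; the sole place the hypothesis is used is the triangle-free argument in the $3$-cycle case, and indeed a triangle in $G(\mathcal{T})$ would produce three common neighbours — so triangle-freeness is precisely what gives the bound $2$ rather than $3$, and I expect that to be the only delicate point. (As usual the statement is read for $u\neq v$; for $u=v$ one trivially has $cn(u,u)=n$.)
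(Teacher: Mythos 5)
Your proof is correct. Note that the paper does not actually prove this lemma --- it is imported from reference~\cite{22} --- so there is no in-text argument to compare against; your reduction of $cn(u,v)$ to counting transpositions $x\in\mathcal{T}$ with $g^{-1}x\in\mathcal{T}$ for $g=u^{-1}v$, the restriction of $g$ to a $3$-cycle or a $2{+}2$ permutation, and the use of triangle-freeness to kill two of the three candidate pairs in the $3$-cycle case is exactly the standard derivation, and it is consistent with the paper's Lemma~\ref{Lem2-5-1} (two common neighbours arise precisely from the disjoint-transposition case). The only cosmetic point is that ``any two vertices'' should be read as distinct vertices, as you already observe.
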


\begin{Lem}{\upshape (See~\cite{22,23}).}  \label{lem2-2}
For $n\geq 4$, $\kappa^{2}(UG_{n})=4n-8$ and $\kappa(B_{n})=n-1$.
\end{Lem}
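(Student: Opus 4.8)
The plan is to treat the two equalities separately. For $\kappa^2(UG_n)=4n-8$, I would establish matching upper and lower bounds. For the upper bound, I would exhibit an explicit $2$-good-neighbor vertex cut of size $4n-8$. Since $UG_n$ is bipartite and triangle free, its girth is $4$; fix a $4$-cycle $C_4=abcda$ in $UG_n$, so that $a,c$ lie in one part of the bipartition and $b,d$ in the other. Each vertex of $C_4$ has degree $n$ and exactly two neighbors inside $C_4$, hence $n-2$ neighbors outside. By Lemma~\ref{lem2-1}, $cn(a,c)\le 2$ and $cn(b,d)\le 2$; since $b,d$ already realize two common neighbors of $a$ and $c$ (and symmetrically $a,c$ for $b,d$), the external neighborhoods of $a$ and $c$ are disjoint, as are those of $b$ and $d$, while adjacent vertices share no neighbor at all because the graph is triangle free. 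Therefore the four external neighborhoods are pairwise disjoint, and $S=N_{UG_n}(V(C_4))$ satisfies $|S|=4(n-2)=4n-8$. Removing $S$ isolates $C_4$ as a component in which every vertex retains its two cycle-neighbors, while every vertex of the large remaining part keeps at least two neighbors; hence $S$ is a $2$-good-neighbor vertex cut and $\kappa^2(UG_n)\le 4n-8$.

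For the lower bound, I would argue by induction on $n$ using the hierarchical decomposition of $UG_n$ into $n$ vertex-disjoint copies $UG_{n-1}^{1},\dots,UG_{n-1}^{n}$, each isomorphic to $UG_{n-1}$ and joined by the cross edges in $E_{i,j}(UG_n)$. Given a $2$-good-neighbor vertex cut $F$, write $F_i=F\cap V(UG_{n-1}^{i})$ and analyze how $F$ meets each copy. The key dichotomy is whether some copy $UG_{n-1}^{i}$ has $|F_i|$ smaller than $\kappa^2(UG_{n-1})=4(n-1)-8$: if every copy is heavily cut one sums the local bounds, whereas if some copy is lightly cut one uses that $UG_{n-1}^{i}-F_i$ retains a large $2$-good-neighbor component, and then counts the cross edges that $F$ must additionally destroy to separate this component from the remaining copies. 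The $n$-regularity of $UG_n$, the bound $cn(u,v)\le 2$, and the fact that any two distinct copies are matched by cross edges feed a counting argument forcing $|F|\ge 4n-8$.

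For $\kappa(B_n)=n-1$, the generating graph of $B_n$ is the path $P_n$ with $n-1$ edges, so $B_n$ is $(n-1)$-regular and connected; hence $\kappa(B_n)\le \delta(B_n)=n-1$ immediately. For the reverse inequality I would invoke that $B_n$ is a Cayley graph and therefore vertex transitive, together with the fact that Cayley graphs generated by a tree of transpositions are maximally connected, giving $\kappa(B_n)\ge n-1$; alternatively one can prove this directly by the analogous recursive decomposition of $B_n$ into copies of $B_{n-1}$ and an induction showing that no vertex cut smaller than $n-1$ can separate two copies.

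The main obstacle is the lower bound for $\kappa^2(UG_n)$: the case analysis over how the cut distributes among the $n$ copies, combined with bounding the number of cross edges that survive between a candidate small component and the rest of the graph, requires the careful combinatorial bookkeeping that makes these $g$-good-neighbor arguments delicate. By contrast, the upper-bound construction and the computation of $\kappa(B_n)$ are comparatively routine.
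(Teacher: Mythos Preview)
The paper does not prove Lemma~\ref{lem2-2}; it is quoted verbatim as a known result from references~\cite{22} (for $\kappa^{2}(UG_{n})=4n-8$) and~\cite{23} (for $\kappa(B_n)=n-1$), so there is no in-paper argument to compare your proposal against.

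Your plan is the standard route and matches in outline what those cited papers do. Two comments on the details. First, in your upper-bound construction the size computation $|N_{UG_n}(V(C_4))|=4(n-2)$ is correct, but the assertion that ``every vertex of the large remaining part keeps at least two neighbors'' is not quite free. Using bipartiteness, a vertex $v\notin V(C_4)\cup S$ in the same colour class as $a,c$ can only lose neighbors into $N(a)\cup N(c)$, and Lemma~\ref{lem2-1} gives at most two in each, so at most four total; this yields $\delta(UG_n-S)\ge n-4\ge 2$ only for $n\ge 6$. For $n\in\{4,5\}$ you must additionally rule out $cn(v,a)=cn(v,c)=2$ simultaneously, which follows from Lemma~\ref{Lem2-5-1} (the $4$-cycle structure forces $a=v(ij)(kl)$ and $c=v(i'j')(k'l')$, incompatible with $c=a(\alpha\beta)(\gamma\delta)$ for a second disjoint pair) or, when $UG_n\cong MB_n$, from Lemma~\ref{Lem2-6}(2). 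This is a genuine case to check, though not difficult.

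Second, your lower-bound sketch via the hierarchical decomposition is the right framework, and you correctly identify it as where all the work lies; the cited paper~\cite{22} carries out precisely that case analysis. Your treatment of $\kappa(B_n)=n-1$ via vertex-transitivity (or the maximal-connectivity result for Cayley graphs on transposition trees) is standard and correct.
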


\begin{Lem}{\upshape (See~\cite{24}).}  \label{lem2-3}
Let $G$ be an $l$-regular $(l\ge3)$ triangle free network with $g(G)=g$. Then $\kappa_c(G)=\kappa^2(G)$ if the following conditions hold:

(1) $\kappa^{2}(G)\le g(l-2)$.

(2) $cn(u,v)\le2$ for any two vertices $u,v\in V(G)$.

(3) $|MC(G-I)|\ge |V(G)|-|I|-g$ for any subset $I\subset V(G)$ with $|I|\le g(l-2)-1$.
\end{Lem}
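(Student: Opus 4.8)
The plan is to prove the two inequalities $\kappa_c(G)\le \kappa^2(G)$ and $\kappa_c(G)\ge \kappa^2(G)$ separately and then combine them. For the upper bound I would take a minimum $2$-good-neighbor cut $T$ with $|T|=\kappa^2(G)$. By definition every vertex of $G-T$ keeps at least two neighbors, so $G-T$ has minimum degree at least $2$; since a finite graph of minimum degree at least $2$ contains a cycle in each of its components and $G-T$ is disconnected, $G-T$ has at least two components, each containing a cycle. Hence $T$ is a cyclic cut and $\kappa_c(G)\le |T|=\kappa^2(G)$. Combined with condition (1) this already gives $\kappa_c(G)\le \kappa^2(G)\le g(l-2)$.

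For the lower bound I would show that no cyclic cut can have size at most $g(l-2)-1$. Suppose $S$ is a cyclic cut with $|S|\le g(l-2)-1$. Then condition (3) applies and yields $|MC(G-S)|\ge |V(G)|-|S|-g$, so all components of $G-S$ other than the largest one together contain at most $g$ vertices. On the other hand, $S$ being a cyclic cut forces at least two components with cycles, and at most one of them is $MC(G-S)$, so there is a cyclic component $C_2\ne MC(G-S)$. Since $G$ is triangle-free with girth $g$, any cyclic component has at least $g$ vertices, whence $|C_2|\ge g$. Comparing with the previous bound forces $|C_2|=g$ and $C_2$ to be the unique non-largest component; moreover a connected graph on $g$ vertices that contains a cycle cannot have a chord (a chord would create a cycle shorter than $g$), so $C_2$ is exactly the $g$-cycle $C_g$.

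The crux is then a local counting argument on $C_2$. Because $C_2$ is a component of $G-S$, every edge leaving $C_2$ ends in $S$; since each of its $g$ vertices has degree $l$ with exactly two neighbors on the cycle, there are precisely $g(l-2)$ edges from $C_2$ to $S$. I would next show that each vertex $s\in S$ is adjacent to at most one vertex of $C_2$, and this is where I expect the main obstacle, as it splits according to the girth. If $g\ge 5$ and $s$ were adjacent to two cycle vertices at cyclic distance $d\le\lfloor g/2\rfloor$, then $s$ together with the shorter arc would form a cycle of length $d+2$, forcing $d+2\ge g$ and hence $g-2\le\lfloor g/2\rfloor$, which is impossible for $g\ge 5$. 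If $g=4$, then $C_2=u_1u_2u_3u_4$ and a vertex $s$ adjacent to two (necessarily opposite) cycle vertices, say $u_1$ and $u_3$, would make $u_2,u_4,s$ three common neighbours of $u_1$ and $u_3$, contradicting condition (2). In either case $s$ meets $C_2$ in at most one vertex, so counting the $g(l-2)$ cross edges gives $g(l-2)\le |N_G(C_2)|\le |S|$, contradicting $|S|\le g(l-2)-1$.

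Hence every cyclic cut has size at least $g(l-2)$, so $\kappa_c(G)\ge g(l-2)$. Chaining the two bounds yields $g(l-2)\le \kappa_c(G)\le \kappa^2(G)\le g(l-2)$, so all three quantities coincide and in particular $\kappa_c(G)=\kappa^2(G)$, completing the proof. Note that condition (2) is used only in the girth-$4$ case and the girth hypothesis handles $g\ge 5$, which explains why both appear among the assumptions.
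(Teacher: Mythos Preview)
The paper does not actually prove this lemma: it is quoted verbatim from reference~\cite{24} and used as a black box (for $n\ge 6$) in the proof of Theorem~\ref{lem3-2}. So there is no in-paper argument to compare against. That said, your proof is correct and self-contained. Your upper-bound step is exactly the argument the paper gives as Lemma~\ref{lem3-1} in the special case $G=UG_n$: a minimum $2$-good-neighbor cut leaves minimum degree at least~$2$ in every component, hence every component contains a cycle. For the lower bound, your use of condition~(3) to force the small side to be a bare $g$-cycle, followed by the edge-count $g(l-2)\le |S|$ via the ``at most one neighbor on $C_2$'' claim, is the intended mechanism; the case split ($g\ge 5$ handled by girth alone, $g=4$ handled by triangle-freeness together with condition~(2)) is exactly why hypotheses (2) and the girth both appear. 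One small remark: you might state explicitly that condition~(1) guarantees $\kappa^2(G)$ is finite, so that a minimum $2$-good-neighbor cut exists and the upper-bound argument gets off the ground; otherwise the proof is complete as written.
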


\begin{Lem}{\upshape (See~\cite{25}).}  \label{lem2-4}
Let $UG_n$ represent a Cayley graph obtained by a transposition unicyclic triangle free graph with order $n\ge 4$. $F$ is a vertex set of $UG_n$. If $|F|\le pn-\frac{p(p+1)}{2}$, where $1\le p\le n-2$, then $UG_n$ has a large connected component, and there are no more than $p-1$ vertices in the remaining components.
\end{Lem}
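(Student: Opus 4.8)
The plan is to induct on the dimension $n$, exploiting the decomposition $UG_n=\bigcup_{i\in[n]}UG_{n-1}^{\,i}$ into $n$ copies of $UG_{n-1}$ joined by cross edges. I would first record the clean reformulation that drives everything: if $C$ denotes a largest component of $UG_n-F$ and $W=V(UG_n)\setminus(F\cup V(C))$ is the set of stray vertices, then every neighbour of a stray vertex lies in $F$ or in its own (non-giant) component, so
\[
N_{UG_n}(W)\setminus W\subseteq F .
\]
Thus, once the giant $C$ is produced and $W$ is known to be small, it remains to prove the \emph{expansion estimate}: any vertex set $W$ with $p\le|W|$ and $|W|$ small satisfies $|N_{UG_n}(W)\setminus W|\ge pn-\frac{p(p+1)}{2}+1$. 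Together these give $|F|\ge|N_{UG_n}(W)\setminus W|\ge pn-\frac{p(p+1)}{2}+1$, contradicting the hypothesis unless $|W|\le p-1$.

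I would anchor the induction at $n=4$, where $1\le p\le 2$: a single vertex has $n$ neighbours, and any two vertices $u,v$ satisfy $|N_{UG_n}(\{u,v\})\setminus\{u,v\}|\ge 2n-2$ by Lemma~\ref{lem2-1} and triangle-freeness, which already meets the required value $2n-2$ at $p=2$. The instance $p=1$ for general $n$ is simply maximal connectivity $\kappa(UG_n)=n$, and I would establish it first as the base of the expansion estimate.

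In the inductive step, set $W_i=W\cap V(UG_{n-1}^{\,i})$ and $F_i=F\cap V(UG_{n-1}^{\,i})$. Every stray vertex has $n-1$ in-copy neighbours and one out-neighbour (two in the $MB_n$ case). The in-copy neighbours are controlled by applying the induction hypothesis to each copy meeting $W$, while the out-neighbours contribute the cross edges leaving $W$. The decisive structural fact is that the cross edges incident with any single copy number $(n-1)!$, which exceeds $pn-\frac{p(p+1)}{2}\le(n-2)n$ for $n\ge 5$ (the case $n=4$ being the base). Hence $F$ cannot sever a copy's giant from the remaining copies, the copy-wise giants merge through surviving cross edges into a single giant $C$, and any locally stray vertex is reabsorbed into $C$ unless its out-neighbour is itself charged to $F$ or to $W$. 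This confinement keeps $W$ small, so the expansion estimate applies; combining the per-copy estimates with the telescoping identity $pn-\frac{p(p+1)}{2}=(n-1)+(n-2)+\cdots+(n-p)$ delivers the required bound on $|N_{UG_n}(W)\setminus W|$.

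The main obstacle is the expansion estimate itself, equivalently the charging argument behind it. The naive per-pair bound from Lemma~\ref{lem2-1} only yields growth of roughly $n-2p$ per added vertex, which is weaker than the $n-p$ demanded by the telescoping identity, so one must exploit the layered structure rather than common-neighbour counts alone. Concretely, the work is to show that the savings a stray set can realise by concentrating in a few copies are always repaid in its cross-edge boundary, so that neither a single heavily faulted copy---whose local parameter may exceed $n-3$, placing it outside the induction hypothesis and requiring separate treatment through surviving cross edges---nor a spread-out configuration can force $|W|\ge p$ while keeping $|N_{UG_n}(W)\setminus W|\le pn-\frac{p(p+1)}{2}$. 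The $MB_n$ case runs in parallel and is only easier, since each vertex then has two out-neighbours, which strengthens the expansion.
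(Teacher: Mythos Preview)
The paper does not prove this lemma; it is quoted from reference~\cite{25} (Li--Meng, \emph{Theoret.\ Comput.\ Sci.}~847 (2020)) and invoked as a black box in the proof of Theorem~\ref{lem3-2}. There is therefore no in-paper argument to compare your proposal against.

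For what it is worth, your outline follows the standard template for ``linearly many faults'' results of this type---induction on $n$ through the hierarchical decomposition $UG_n=\bigcup_i UG_{n-1}^{\,i}$, merging of the copy-wise giants via surviving cross edges, and a neighbourhood lower bound on the residual stray set $W$---and this is indeed the method of the cited source. Two places where your sketch is thinner than it looks: the expansion estimate on $N_{UG_n}(W)\setminus W$ cannot be invoked until an a~priori upper bound on $|W|$ is already in hand, so the inductive construction of the giant (together with the bookkeeping that at most a bounded number of copies can have $|F_i|$ above the copy's own threshold $(p')(n-1)-\binom{p'+1}{2}$) is not a preamble but the load-bearing part of the argument; and for those heavily faulted copies the recovery of strays through out-neighbours requires a genuine case split on how many such copies occur, not merely the observation that $(n-1)!>|F|$. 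You flag both issues, but these are precisely where the length of the full proof in~\cite{25} resides.
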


\begin{Lem}{\upshape (See~\cite{22}).} \label{Lem2-5-1}
Let $C_{4}=u_{1}u_{2}u_{3}u_{4}u_{1}$ be a 4-cycle in $UG_{n}$ with $n\ge 4$. Then $u_{2}=u_{1}(ij),~u_{3}=u_{2}(kl),~u_{4}=u_{3}(ij),~u_{1}=u_{4}(kl)$ for $i,~j,~k,~l\in[n]$ and $i,~j,~k,~l$ differ from each other.
\end{Lem}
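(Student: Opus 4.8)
The plan is to translate the combinatorial $4$-cycle into an algebraic identity among the transposition generators and then analyze that identity using the triangle-free hypothesis on $G(\mathcal{T})$. Since $UG_n=Cay(Sym(n),\mathcal{T})$ is a Cayley graph under right multiplication, every edge records right-multiplication by a generator of $\mathcal{T}$. Writing $u_{2}=u_{1}s_{1}$, $u_{3}=u_{2}s_{2}$, $u_{4}=u_{3}s_{3}$, and $u_{1}=u_{4}s_{4}$ with each $s_{i}\in\mathcal{T}$, the closed walk $u_{1}u_{2}u_{3}u_{4}u_{1}$ yields the relation $s_{1}s_{2}s_{3}s_{4}=e$. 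Because a $4$-cycle has four distinct vertices, the partial products cannot collapse: $u_{1}\neq u_{3}$ forces $s_{1}\neq s_{2}$, and $u_{2}\neq u_{4}$ forces $s_{2}\neq s_{3}$. Using that each transposition is self-inverse, I would rewrite the relation as $s_{1}s_{2}=(s_{3}s_{4})^{-1}=s_{4}s_{3}$, which reduces the whole problem to deciding when two products of two distinct transpositions coincide.

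Next I would classify the element $\sigma:=s_{1}s_{2}$. A product of two transpositions has support of size at most four, and the three possibilities are exhaustive and determined by the overlap: the product is the identity if the transpositions are equal, a $3$-cycle if they share exactly one index, and a double transposition of type $(2,2)$ if they are disjoint. Since $s_{1}\neq s_{2}$, the identity is excluded, so $\sigma$ is either a $3$-cycle or a double transposition; the same dichotomy applies to $s_{4}s_{3}=\sigma$, and in each case the supports of $s_{3},s_{4}$ must agree with that of $\sigma$.

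The decisive case is the $3$-cycle case, which I expect to be the main obstacle and the only place where the triangle-free hypothesis is used. If $\sigma=(abc)$, then by the support analysis all four generators $s_{1},s_{2},s_{3},s_{4}$ lie among the transpositions $(ab),(bc),(ca)$ on $\{a,b,c\}$, hence all lie in $\{(ab),(bc),(ca)\}\cap\mathcal{T}$. Triangle-freeness of $G(\mathcal{T})$ forbids all three of these from belonging to $\mathcal{T}$, so at most two are admissible; with exactly two admissible transpositions $t,t'$ the only products of two distinct ones are the two opposite $3$-cycles $tt'$ and $t't$, so $s_{1}s_{2}=\sigma$ and $s_{4}s_{3}=\sigma$ force the same ordered pair, giving $s_{2}=s_{3}$ and contradicting non-degeneracy. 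If fewer than two are admissible, $\sigma$ cannot be realized at all. Thus the $3$-cycle case is impossible, and this is precisely where the hypothesis does its work.

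In the remaining case $\sigma=(ab)(cd)$ with $\{a,b\}\cap\{c,d\}=\emptyset$, so $s_{1}=(ab)$ and $s_{2}=(cd)$. A $(2,2)$-permutation factors into two transpositions only as the (commuting) product of its two constituent $2$-cycles, so $s_{4}s_{3}=(ab)(cd)$ leaves exactly two ordered possibilities. The possibility $s_{3}=(cd),\,s_{4}=(ab)$ again gives $s_{2}=s_{3}$ and is excluded, so the surviving possibility is $s_{3}=(ab)=s_{1}$ and $s_{4}=(cd)=s_{2}$. Setting $(ij)=(ab)$ and $(kl)=(cd)$ then yields $u_{2}=u_{1}(ij)$, $u_{3}=u_{2}(kl)$, $u_{4}=u_{3}(ij)$, $u_{1}=u_{4}(kl)$ with $i,j,k,l$ pairwise distinct, which is exactly the claimed structure. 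No heavy computation is needed beyond the elementary facts about products of two transpositions and the careful bookkeeping of the two cases.
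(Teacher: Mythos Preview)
Your argument is correct and complete. Note, however, that the paper does not supply its own proof of this lemma: it is quoted from reference~\cite{22} and stated without proof, so there is no in-paper argument to compare against. Your approach---writing the $4$-cycle as $s_{1}s_{2}s_{3}s_{4}=e$ with $s_{i}\in\mathcal{T}$, rewriting as $s_{1}s_{2}=s_{4}s_{3}$, classifying this product as a $3$-cycle or a $(2,2)$-element, and then using triangle-freeness of $G(\mathcal{T})$ to kill the $3$-cycle case---is the standard one and matches how the result is obtained in the original source. Both case analyses are handled cleanly; in particular, the uniqueness (up to order) of the factorization of a $(2,2)$-permutation into two disjoint transpositions, combined with $s_{2}\neq s_{3}$, is exactly what forces $s_{3}=s_{1}$ and $s_{4}=s_{2}$.
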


\begin{Lem}{\upshape (See~\cite{22,24}).} \label{Lem2-5}
Let $MB_{n}$ be an $n$-dimensional modified bubble-sort graph. The following results hold:

(1) $\kappa(MB_{n})=n$.

(2) $|E_{i,j}(MB_n)|=2(n-2)!$ for any two distinct integers $i, j \in [n]$.

(3) $\{u^+, u^-\}\cap\{v^+, v^-\}=\emptyset$ for any two distinct vertices $u, v\in V(MB_n^i )$ with $i\in[n]$.

(4) $u^+\in V(MB_n^{[3,n]})$ or $u^-\in V(MB_n^{[3,n]})$ for any vertex $u\in V(MB_n^{[2]})$.

\end{Lem}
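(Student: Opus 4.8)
The plan is to treat the four assertions separately: (2), (3) and (4) will fall out of a single observation about how the two cross-edge operations act on the coordinates of a permutation, while (1) is the only part needing a genuine fault-tolerance argument. Writing a vertex as $u=u_{1}u_{2}\cdots u_{n}$, so that $u\in V(B_{n-1}^{i})$ exactly when $u_{n}=i$, the operation $u^{+}=u(1n)$ interchanges the symbols in positions $1$ and $n$ and hence sends $u$ into $V(B_{n-1}^{u_{1}})$, whereas $u^{-}=u((n-1)n)$ interchanges positions $n-1$ and $n$ and sends $u$ into $V(B_{n-1}^{u_{n-1}})$. For (2) I would split $E_{i,j}(MB_n)$ into $+$-edges and $-$-edges. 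A $+$-edge meets $V(B_{n-1}^{i})$ at a vertex $u$ with $u_{n}=i$ and $u_{1}=j$, and there are exactly $(n-2)!$ such permutations; symmetrically the $-$-edges correspond to the $(n-2)!$ vertices with $u_{n}=i$ and $u_{n-1}=j$. Since $u^{+}=u^{-}$ would force $(1n)=((n-1)n)$, which is false for $n\ge4$, the two families are disjoint and $|E_{i,j}(MB_n)|=2(n-2)!$.

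Parts (3) and (4) are then position arguments. Since each operation is an involution, $u^{+}=v^{+}$ or $u^{-}=v^{-}$ forces $u=v$; and $u^{+}\ne v^{-}$ for all $u,v\in V(B_{n-1}^{i})$ because $u^{+}$ carries the symbol $u_{n-1}\ne i$ in position $n-1$ while $v^{-}$ carries $v_{n}=i$ there, the case $u^{-}=v^{+}$ being symmetric; this settles (3). For (4), if some $u\in V(MB_n^{[2]})$ had both $u^{+},u^{-}\in V(MB_n^{[2]})$, then $u_{1},u_{n-1}\in\{1,2\}$, while also $u_{n}\in\{1,2\}$; but the three distinct positions $1,n-1,n$ cannot all carry symbols from the two-element set $\{1,2\}$ in a permutation, a contradiction, so $u^{+}\in V(MB_n^{[3,n]})$ or $u^{-}\in V(MB_n^{[3,n]})$.

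For (1) the bound $\kappa(MB_n)\le n$ is immediate, since deleting $N_{MB_n}(v)$ isolates $v$. For the lower bound I would suppose a vertex cut $F$ with $|F|\le n-1$ and set $F_{i}=F\cap V(B_{n-1}^{i})$. By Lemma~\ref{lem2-2} we have $\kappa(B_{n-1})=n-2$, and since $2(n-2)>n-1$ for $n\ge4$, at most one copy can carry $n-2$ or more faults. If every $|F_{i}|\le n-3$, each $B_{n-1}^{i}-F_{i}$ is connected, and between two copies the number of destroyed cross edges is at most $|F_{i}|+|F_{j}|\le n-1<2(n-2)!$, so by (2) a cross edge survives between every pair and $MB_n-F$ is connected. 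Otherwise one copy, say $B_{n-1}^{1}$, carries $|F_{1}|\ge n-2$, leaving at most one fault outside it; then $B_{n-1}^{2},\dots,B_{n-1}^{n}$ are all connected (a single deletion cannot disconnect $B_{n-1}$, as $n-2\ge2$) and, by the same cross-edge count, merge into one component $C$. Each surviving $u\in V(B_{n-1}^{1})$ has $u_{1},u_{n-1}\ne1$, so its two out-neighbors lie in copies other than $B_{n-1}^{1}$; as at most one outside fault exists, one of them survives and lies in $C$, attaching $u$ to $C$. In either case $MB_n-F$ is connected, contradicting the choice of $F$, so $\kappa(MB_n)=n$.

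The step I expect to be the crux is the final subcase of (1), where a single copy absorbs almost the whole budget and may itself be internally disconnected. The argument there hinges on the fact that the budget remaining outside that copy is then at most one vertex, so the two out-neighbors of any surviving vertex cannot both be faulty; this is precisely what reattaches the possibly-shattered copy to the giant component, and it is where the out-neighbor structure recorded in (3) and (4) does the essential work.
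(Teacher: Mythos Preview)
The paper does not prove Lemma~\ref{Lem2-5}; it is quoted from references~\cite{22,24} without argument. Your proof is correct and complete: parts (2)--(4) follow directly from tracking the symbol in position $n$ under the two cross-edge transpositions $(1n)$ and $((n-1)n)$, and part (1) is a clean decomposition argument using $\kappa(B_{n-1})=n-2$ from Lemma~\ref{lem2-2} together with the cross-edge count from (2) and the fact (established in your proof of (2)) that each vertex sends its two out-neighbors to distinct copies, so a single external fault cannot kill both.
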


\begin{Lem}{\upshape (See~\cite{27,28}).} \label{Lem2-6}
Let $MB_{n}$ be an $n$-dimensional modified bubble-sort graph. For $n\ge4$, the following results hold:


(1) Let $p, q, s\in V(MB_n)$ with $pq\in E(MB_n)$, where $p, q, s$ are different from each other. Then $cn(s,p)=0$ or $cn(s,q)=0$.

(2) For any nine distinct vertices $u_i\in V(MB_n)$ with $i\in[9]$, $MB_n$ cannot contain the structure of $A$ (see Fig.\ref{fig2}).
\end{Lem}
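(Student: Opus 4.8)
For part (1), the cleanest route is the bipartite structure of $MB_n$. Since $\mathcal{T}$ consists of transpositions and every transposition is an odd permutation, each edge $g\sim g\tau$ (with $\tau\in\mathcal{T}$) joins permutations of opposite sign; hence $MB_n$ is bipartite with parts given by the even and odd permutations, and adjacent vertices lie in opposite parts. The plan is then immediate: a common neighbor $w$ of $s$ and $p$ forces $s$ and $p$ into the \emph{same} part (both are neighbors of $w$), and likewise a common neighbor of $s$ and $q$ forces $s$ and $q$ into the same part. But $pq\in E(MB_n)$ places $p$ and $q$ in opposite parts, so $s$ cannot be in the same part as both. Therefore $s$ can share a common neighbor with at most one of $p,q$, i.e. $cn(s,p)=0$ or $cn(s,q)=0$. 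No case analysis is needed, and the argument uses only bipartiteness.

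For part (2), I would argue by contradiction. Suppose nine distinct vertices $u_1,\dots,u_9\in V(MB_n)$ realize the configuration $A$ of Fig.~\ref{fig2}. Since $MB_n$ is triangle free with girth $4$, every cycle occurring inside $A$ is built from $4$-cycles, so I would apply Lemma~\ref{Lem2-5-1} to each such $4$-cycle: its four vertices are obtained by alternately right-multiplying by two transpositions $(i_aj_a),(k_al_a)\in\mathcal{T}$ whose four indices are pairwise distinct, equivalently by two non-adjacent edges of the generating cycle $C_n$. The strategy is to read off, from the way the $4$-cycles of $A$ share edges and high-degree vertices, a system of relations among these generators: each shared edge forces two cycles to use a common transposition, while each vertex lying on several of the prescribed common-neighbor pairs is capped by Lemma~\ref{lem2-1} ($cn(u,v)\le 2$) in how many distinct generators may meet there. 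Superimposing these constraints with part~(1) (no vertex has a common neighbor with both endpoints of an edge) and with bipartiteness (no odd closed walk) should force two of the required generators to share an index, contradicting the disjoint-support conclusion of Lemma~\ref{Lem2-5-1}, or else force two of the nine vertices to coincide.

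The main obstacle lies entirely in part~(2): translating the incidence pattern of $A$ into transposition equations and then running the case analysis on which indices coincide, all the while verifying that the nine vertices remain distinct. I expect bipartiteness to eliminate instantly any sub-configuration of $A$ that contains an odd cycle, so the genuine work concentrates on the bipartite-consistent (even-cycle) part of $A$, where neither parity nor a single application of $cn(u,v)\le 2$ suffices and only the rigidity of Lemma~\ref{Lem2-5-1} combined with the bound $cn(u,v)\le 2$ yields the contradiction. Organizing these overlapping cases compactly, rather than the individual computations, will be the crux of the argument.
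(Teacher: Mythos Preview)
The paper does not prove this lemma; it is quoted from references~\cite{27,28} (note the ``(See~\cite{27,28})'' tag), so there is no in-paper argument to compare against.

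Your proof of part~(1) is correct and complete. The bipartiteness argument is exactly right: a common neighbor of $s$ and $p$ forces $s$ into the same bipartition class as $p$, a common neighbor of $s$ and $q$ forces $s$ into the same class as $q$, and the edge $pq$ puts $p,q$ in opposite classes---contradiction. This is the standard way to see it and is in fact how the paper itself \emph{uses} (1) later (e.g.\ in the proof of Lemma~\ref{Lem2-8} the phrase ``to avoid the occurrence of odd cycles'' is invoked for precisely this parity obstruction).

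For part~(2) you have written a strategy, not a proof, and you say as much. The plan---extract generator relations from the $4$-cycles of $A$ via Lemma~\ref{Lem2-5-1}, then derive a clash using $cn(u,v)\le 2$ and bipartiteness---is reasonable in outline, but whether it closes depends entirely on the actual incidence pattern of $A$ in Fig.~\ref{fig2}, which you do not have. In particular, the structure $A$ involves three vertices each sharing two common neighbors with a fixed vertex (this is how the paper deploys (2) in Lemma~\ref{Lem2-8}, Case~1.2 and Case~3), and the contradiction comes from the specific way those three $4$-cycles are forced to overlap; your sketch does not yet isolate that mechanism. So part~(2) remains genuinely open in your write-up: the missing ingredient is the concrete combinatorics of $A$, without which the ``system of relations'' cannot be written down, let alone shown inconsistent.
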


%

\begin{figure}[htbp]
\centering
\includegraphics[width=0.3\textwidth]{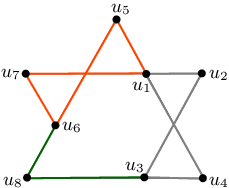}
\caption{The structure of $A$.}
\label{fig2}
\end{figure}

\begin{Lem}{\upshape (See~\cite{27}).}  \label{lem2-7}
Let $MB_{n}$ be an $n$-dimensional modified bubble-sort graph. For any subset $F\subset V(MB_{4})$, the following results hold:

(1) If $|F|\le 5$ and $MB_{4}-F$ is disconnected, then $MB_{4}-F$ has two component, one of which is an isolated vertex.

(2) If $|F|\le 6$ and $MB_{4}-F$ is disconnected, then $MB_{4}-F$ has a large component $C$ with $|V(C)|\ge 4!-|F|-2$.
\end{Lem}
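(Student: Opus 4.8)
The plan is to split by the size of $F$ and lean on Lemma~\ref{lem2-4} for the bulk of the work, using the structural Lemmas~\ref{lem2-1} and~\ref{Lem2-6} only to dispose of the borderline configurations. For part (1), observe that $MB_4$ is exactly the graph $UG_4$ whose transposition generating graph is the $4$-cycle $C_4$, so Lemma~\ref{lem2-4} applies with $n=4$. Take $p=2$, which satisfies $1\le p\le n-2=2$; then $|F|\le 5=2\cdot 4-\tfrac{2\cdot 3}{2}$, and the lemma gives a large component together with at most $p-1=1$ vertices in the remaining components. Since $MB_4-F$ is assumed disconnected there is at least one vertex outside the large component, hence exactly one, and it forms an isolated component. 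Thus $MB_4-F$ has precisely two components, one of which is an isolated vertex, proving (1).

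For part (2), when $|F|\le 5$ the conclusion is immediate from (1): the large component misses only the single isolated vertex, so $|V(C)|=24-|F|-1\ge 24-|F|-2$. Assume therefore $|F|=6$. Writing $C$ for the largest component and $R$ for the union of all the others, the target $|V(C)|\ge 24-|F|-2$ is equivalent to $|R|\le 2$. Every component $D$ of $MB_4-F$ satisfies $N_{MB_4}(D)\subseteq F$, hence $|N(D)|\le 6$. I first rule out any component of $R$ of size exactly $3$: a connected triangle-free set of three vertices is a path $v_1v_2v_3$, whose midpoint $v_2$ is already a common neighbor of $v_1,v_3$, so by Lemma~\ref{lem2-1} the two ends share at most one further outside neighbor, and a direct count of the eight outgoing edges gives $|N(D)|\ge 7$. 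For size $4$ (the shapes $P_4$, $K_{1,3}$, $C_4$) Lemma~\ref{Lem2-6}(1) forces the "diametral" common neighbors to vanish and again yields $|N(D)|\ge 7$. Either way $|F|\ge 7$, a contradiction, so every component of $R$ has size at most $2$.

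It remains to show that size-$\le 2$ components cannot accumulate to $|R|\ge 3$; if they did, one of three configurations would sit inside the $6$-set $F$. Three isolated vertices send $12$ edges into $F$, and since each pair shares at most two common neighbors (Lemma~\ref{lem2-1}) the bound $\sum_{f}\binom{d_f}{2}\le 6$ holds with $\sum_f d_f=12$ over six terms; by convexity this forces every vertex of $F$ to be a common neighbor of exactly two of the three and every pair to share exactly two, which is precisely the configuration $A$ forbidden by Lemma~\ref{Lem2-6}(2). An edge $v_1v_2$ together with an isolated $v_3$ forces $cn(v_3,v_1)=cn(v_3,v_2)=2$, and two independent edges likewise force a pair of nonzero common-neighbor counts across an edge; both contradict Lemma~\ref{Lem2-6}(1). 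Hence, once a component of size $\ge 3$ plays the role of $C$, we get $|R|\le 2$.

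The one situation the preceding does not yet settle is a near-balanced split, in which $MB_4-F$ has a \emph{second} component $C_2$ of size between $5$ and $9$ (so the largest component could have as few as nine vertices); excluding this is the main obstacle, since the local boundary estimates above only cover sizes $3$ and $4$. Here I would invoke $\kappa^{2}(MB_4)=8$ from Lemma~\ref{lem2-2}: a disconnection by only six vertices cannot be a $2$-good-neighbor cut, so the smaller side $C_2$ contains a vertex of degree $\le 1$, that is, a leaf contributing at least three of its four neighbors to $F$. Peeling such low-degree vertices off $C_2$ while tracking their cut-neighbors, and combining the resulting boundary bookkeeping with Lemma~\ref{Lem2-6} and the cross-edge counts of Lemma~\ref{Lem2-5}, should show that a second component of size $\ge 5$ again forces $|N(C_2)|\ge 7>6$. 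Because $MB_4$ has only $24$ vertices this reduces to a finite structural check, and it is precisely this medium-component exclusion that carries the technical weight of the argument.
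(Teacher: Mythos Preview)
The paper does not give its own proof of this lemma: it is quoted verbatim from reference~\cite{27} (Nan--Wang--Zhao). So there is no in-paper argument to compare against, and the question is simply whether your proposal stands on its own.

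Your proof of part~(1) is clean and correct. Identifying $MB_4$ with $UG_4$ and invoking Lemma~\ref{lem2-4} with $n=4$, $p=2$ gives exactly $|F|\le 5$ and at most one leftover vertex, which is the statement.

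Part~(2), however, has a genuine gap that you yourself flag. Your boundary counts for $|D|=3$ and $|D|=4$ are fine, and the exclusion of three-or-more vertices in size-$\le 2$ components via Lemma~\ref{Lem2-6} is correct. But after removing six vertices you have $18$ vertices left, so a non-largest component $C_2$ may have any size up to $9$; you only eliminate sizes $3$ and $4$ and then defer sizes $5$--$9$ to a sketch. That sketch is not just incomplete, it contains an actual error: from $\kappa^{2}(MB_4)=8>6$ you may conclude that \emph{some} vertex of $MB_4-F$ has degree at most $1$, but nothing forces that vertex to lie in $C_2$ rather than in the large component $C$. So the peeling you describe may never get started on $C_2$, and the ``finite structural check'' is doing all the work with none of the details supplied. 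To close the argument along your lines you would need a genuine isoperimetric bound $|N(D)|\ge 7$ for every connected $D\subset MB_4$ with $5\le |D|\le 9$; this is plausible but is exactly the content that has to be proved.
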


\begin{Lem}\label{Lem2-8}
Let $S$ be a vertex subset of $MB_{n}$.  If $\vert S\vert=4$, then

\begin{center}
$\vert N(S)\vert\ge
\begin{cases}
  & 4n-8~~~~\text{  for } ~n=[4,5], \\
  & 4n-9~~~~\text{  for } ~n\ge6.
\end{cases}$
\end{center}
Moreover, the bound is sharp.
\end{Lem}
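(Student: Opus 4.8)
The plan is to reduce the bound to a counting identity and then split on the isomorphism type of $MB_n[S]$. Writing $e:=|E(MB_n[S])|$ and letting $n_t$ (for $t\in\{1,2,3,4\}$) be the number of vertices of $MB_n-S$ with exactly $t$ neighbours in $S$, a double count of the edges leaving $S$ gives $n_1+2n_2+3n_3+4n_4=4n-2e$ and $|N(S)|=n_1+n_2+n_3+n_4$, hence
\[
|N(S)|=4n-2e-\Delta,\qquad \Delta:=n_2+2n_3+3n_4 .
\]
So it suffices to show $\Delta+2e\le 8$ for $n\in\{4,5\}$ and $\Delta+2e\le 9$ for $n\ge 6$, and then to produce extremal $S$. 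Since $MB_n$ is triangle free, $MB_n[S]$ is one of $C_4,P_4,K_{1,3},P_3\cup K_1,2K_2,K_2\cup 2K_1,\overline{K_4}$ (so $e\le 4$); moreover a vertex counted by $n_t$ is adjacent to $t$ pairwise non-adjacent vertices of $S$, and each non-edge pair of $S$ has at most two common neighbours by Lemma~\ref{lem2-1}.

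First I would record two structural facts. (i) $n_4\le 1$, because two vertices each adjacent to all of $S$ would share four common neighbours, contradicting Lemma~\ref{lem2-1}; the same argument gives $n_3=0$ when $n_4=1$, and $n_4=1$ forces $e=0$. (ii) By Lemma~\ref{Lem2-6}(1), if $pq\in E(MB_n[S])$ then for every $s\notin\{p,q\}$ at most one of $cn(s,p),cn(s,q)$ is nonzero; combined with the observation that each edge $pq$ of $MB_n[S]$ is itself a common neighbour of the non-edge pairs of $S$ it creates, this severely limits external common neighbours once $e\ge 1$.

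Then I would run through the cases. If $MB_n[S]=C_4=u_1u_2u_3u_4u_1$, then $\{u_2,u_4\}$ are the only common neighbours of $u_1,u_3$ and vice versa, so $n_2=n_3=n_4=0$ and $\Delta+2e=8$. If $MB_n[S]\in\{P_4,2K_2,K_2\cup 2K_1\}$, the restrictions in (ii) leave at most $8-2e$ units of overcounting, giving $\Delta+2e\le 8$. In the three remaining cases $MB_n[S]\in\{K_{1,3},P_3\cup K_1,\overline{K_4}\}$ one obtains $\Delta+2e\le 9$, and chasing the inequalities to equality isolates a short list of sub-configurations in which every non-edge pair of $S$ that has a common neighbour in fact has exactly two. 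By Lemma~\ref{Lem2-5-1}, two distinct vertices $x,y$ of $MB_n$ have two common neighbours only when $y=x\tau\tau'$ with $\tau,\tau'$ disjoint transpositions of $\mathcal T$; since $\mathcal T=E(C_n)$, each extremal sub-configuration forces three pairwise disjoint edges of $C_n$, hence $n\ge 6$. Therefore $\Delta+2e\le 8$ when $n\in\{4,5\}$ (for $n=4$ this is also immediate from Lemma~\ref{lem2-7}). For sharpness, any induced $4$-cycle $S$ — which exists in $MB_n$ for every $n\ge 4$ — has $|N(S)|=4n-8$ by the computation above; and for $n\ge 6$, taking $\mathcal T\supseteq\{(12),(34),(56)\}$ and $S=\{\mathrm{id},(12),(34),(56)\}$ (so $MB_n[S]=K_{1,3}$ centred at $\mathrm{id}$) one checks that the three leaves are pairwise at distance two with common-neighbour sets $\{\mathrm{id},(12)(34)\}$, $\{\mathrm{id},(12)(56)\}$, $\{\mathrm{id},(34)(56)\}$, that no external vertex is adjacent to all three leaves, and that triangle freeness keeps $N(\mathrm{id})$ disjoint from the neighbourhoods of the leaves; this gives $|N(S)|=4n-9$.

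I expect the bookkeeping in the third paragraph to be the main obstacle: for $MB_n[S]\in\{P_3\cup K_1,\overline{K_4}\}$ there are many a priori degree sequences $\{d_S(w):w\in N(S)\}$, and excluding those with $\Delta+2e\ge 10$ in general, and those with $\Delta+2e=9$ when $n\le 5$, needs tight control of how the common-neighbour sets of different pairs of $S$ overlap — this is where Lemma~\ref{Lem2-5-1} and the forbidden configuration $A$ of Lemma~\ref{Lem2-6}(2) are used in earnest.
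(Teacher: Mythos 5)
Your proposal follows essentially the same route as the paper: both classify $S$ by the isomorphism type of $MB_n[S]$ and bound the overcounting in $\sum_{i}|N(x_i)|$ via the common-neighbour lemmas (Lemmas~\ref{lem2-1}, \ref{Lem2-5-1} and \ref{Lem2-6}), with the $K_{1,3}$ configuration generated by three pairwise disjoint transpositions as the sole source of the value $4n-9$ for $n\ge 6$; your identity $|N(S)|=4n-2e-\Delta$ is just a tidier packaging of the paper's inclusion--exclusion, and your two extremal examples ($C_4$ and the $K_{1,3}$ on $\{\mathrm{id},(12),(34),(56)\}$) are exactly the paper's sharpness witnesses. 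The bookkeeping you defer for $\overline{K_4}$ and $P_3\cup K_1$ is precisely what the paper's Cases 1 and 3 carry out (using the forbidden configuration $A$ to cap the number of vertex pairs with two common neighbours), and those cases in fact give $4n-8$ for all $n$, so your weaker claim that $\Delta+2e\le 9$ with equality forcing $n\ge 6$ is enough.
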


\begin{proof}
Let any four vertices $x_{1},x_{2},x_{3}, x_{4}\in V(MB_{n})$ and $S=\left \{ x_{1},x_{2},x_{3},x_{4} \right \} $. Since $MB_{n}$ is bipartite, it contains no 3-cycle, which implies that $\vert E(MB_{n}[S])\vert\le4$. Consider the following cases depending on the cardinality of $\vert E(MB_{n}[S])\vert$.

\textbf{Case 1:} $\vert E(MB_{n}[S])\vert=0$.

It is clear that the vertex set is a set of isolated vertices. According to Lemma \ref{Lem2-6}, any two distinct vertices in $S$ have at most two common neighbors in $MB_{n}$. Then, we proceed with the following case analysis based on the configuration of their common neighbors.

\textbf{Case 1.1:}  There exist at most two pairs $(i, j)$ such that $cn(x_{i},x_{j})=2$.

It is evident that $\vert N_{MB_{n}}(S)\vert=\sum_{i=1}^{4} |N_{MB_{n}}(x_{i})|-|{\textstyle \bigcup_{1\le i<j\le4}^{}}(N_{MB_{n}}(x_{i})\cap N_{MB_{n}}(x_{j}))|\ge 4n-\binom{4}{2}-2 =4n-8$.

\textbf{Case 1.2:} There exist at least three pairs $(i, j)$ such that $cn(x_{i},x_{j})=2$.

Without lose of generality, assume that $cn(x_{1},x_{2})=2$ and $cn(x_{2},x_{3})=2$. By Lemma \ref{Lem2-6}(4), there has no structure $A$ in $MB_{n}$. We immediately have $cn(x_{1},x_{2})=0$. If $cn(x_{i},x_{4})=2$ for some integer $i\in\{1,3\}$, according Lemma \ref{Lem2-6}(4), we have $cn(x_{i},x_{2})=0$. It follows directly that $\sum_{i=1}^{4} |N_{MB_{n}}(x_{i})|-|{\textstyle \bigcup_{1\le i<j\le4}^{}}(N_{MB_{n}}(x_{i})\cap N_{MB_{n}}(x_{j}))|\ge 4n-8$. If $cn(x_{2},x_{4})=2$, according Lemma \ref{Lem2-6}(4), we have $cn(x_{i},x_{4})=0$ for $i\in\{1,3\}$. It follows directly that $\vert N_{MB_{n}}(S)\vert=\sum_{i=1}^{4} |N_{MB_{n}}(x_{i})|-|{\textstyle \bigcup_{1\le i<j\le4}^{}}(N_{MB_{n}}(x_{i})\cap N_{MB_{n}}(x_{j}))|\ge 4n-6$.

\textbf{Case 2:} $\vert E(MB_{n}[S])\vert=1$.

Clearly, the subgraph $MB_{n}[S]$ is isomorphic to $G_{a}$ (see Fig.\ref{fig4}). Since there has no odd cycles in $MB_{n}$, by Lemma \ref{Lem2-6}, we obtain that $cn(x_{1},x_{2})=0$ and $|N_{MB_{n}}(x_{j})\cap N_{MB_{n}}(\{x_{1},x_{2}\})|\le 2$ with $i\in [3,4]$. Moreover,  we have that $cn(x_{3},x_{4})\le 2$. It follows that $\vert N_{MB_{n}}(S)\vert=\sum_{i=1}^{4} |N_{MB_{n}}(x_{i})\setminus S|-|{\textstyle \bigcup_{1\le i<j\le4}^{}}(N_{MB_{n}}(x_{i})\cap N_{MB_{n}}(x_{j}))|\ge 2(n-1)+n+n-6=4n-8$.

\begin{figure}[htbp]
\centering
\includegraphics[width=0.9\textwidth]{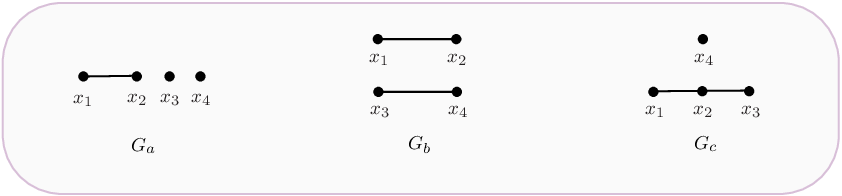}
\caption{ The explanation of  $G_{a}$,$G_{b}$ and $G_{c}$.}
\label{fig4}
\end{figure}

\textbf{Case 3:} $\vert E(MB_{n}[S])\vert=2$.

Clearly, the subgraph $MB_{n}[S]$ is isomorphic to $G_{b}$ or $ G_{c}$ (see Fig.\ref{fig4}).

Scenarios 1: $MB_{n}[S]$ is isomorphic to $G_{b}$. To avoid the occurrence of odd cycles in $MB_{n}$, combining Lemma \ref{Lem2-6}, we find that $cn(x_{i},x_{i+1})=0$ and $|N_{MB_{n}}(x_{j})\cap N_{MB_{n}}(\{x_{3},x_{4}\})|\le 2$, where $i\in\{1,3\}$ and $j\in[2]$. It follows that $\vert N_{MB_{n}}(S)\vert=\sum_{i=1}^{4} |N_{MB_{n}}(x_{i})\setminus S|-|{\textstyle \bigcup_{1\le i<j\le4}^{}}(N_{MB_{n}}(x_{i})\cap N_{MB_{n}}(x_{j}))|\ge 4(n-1)-4=4n-8$.

Scenarios 2: $MB_{n}[S]$ is isomorphic to $G_{c}$(see Fig.\ref{fig4}). Base on the fact that there has no odd cycles and Lemma \ref{Lem2-6}(4), we obtain that there exists at most one integer $i\in[3]$ such that $cn(x_{i},x_{4})=2$ and $cn(x_{j},x_{j+1})=0$ for $j\in [2]$. If $cn(x_{1},x_{4})=2$, then $cn(x_{2},x_{4})=0$. In this case, to avoid the occurrence of the structure of $A$ in $MB_{n}$, we have $|N_{MB_{n}}(x_{3})\cap N_{MB_{n}}(\{x_{1},x_{4}\})|\le1$ except the vertex $x_{2}$. It implies that $|{\textstyle \bigcup_{1\le i<j\le4}^{}}(N_{MB_{n}}(x_{i})\cap N_{MB_{n}}(x_{j}))\setminus \{x_{2}\}|\le3$. If $cn(x_{2},x_{4})=2$, then $cn(x_{i},x_{4})=0$ for $i\in \{1,3\}$. In view of $cn(x_{1},x_{3})\le1$ except the vertex $x_{2}$, we have $|{\textstyle \bigcup_{1\le i<j\le4}^{}}(N_{MB_{n}}(x_{i})\cap N_{MB_{n}}(x_{j}))\setminus \{x_{2}\}|\le3$. It follow that $\vert N_{MB_{n}}(S)\vert=\sum_{i=1}^{4} |N_{MB_{n}}(x_{i})\setminus S|-|{\textstyle \bigcup_{1\le i<j\le4}^{}}(N_{MB_{n}}(x_{i})\cap N_{MB_{n}}(x_{j}))\setminus \{x_{2}\}|\ge n+2(n-1)+n-2-3=4n-7$.

\textbf{Case 4:} $\vert E(MB_{n}[S])\vert=3$.

Clearly, the subgraph $MB_{n}[S]$ is isomorphic to $K_{1,3}$ or $ P_{4}$.

Scenarios 1: $MB_{n}[S]$ is isomorphic to $K_{1,3}$. Without loss of generality, let $K_{1,3}=\{x_{1}; x_{2},x_{3},x_{4}\}$. Since any two vertices of $V(MB_n)$ have at most 2 common neighbors, we obtain $|{\textstyle \bigcup_{1\le i<j\le4}^{}}(N_{MB_{n}}(x_{i})\cap N_{MB_{n}}(x_{j}))\setminus \{x_{1}\}|\le3$. Assume that $|{\textstyle \bigcup_{1\le i<j\le4}^{}}(N_{MB_{n}}(x_{i})\cap N_{MB_{n}}(x_{j}))\setminus \{x_{1}\}|=3$, we obtain that $cn(x_{i},x_{j})=1$ except the vertex $x_{1}$ for $2\le i<j\le4$. By Lemma \ref{Lem2-6}(3), there exist six $i_{j}\in[n]$ with $j\in [6]$ such that $x_{2}=x_{1}(i_{1}i_{2})$,~$x_{3}=x_{1}(i_{3}i_{4})$ ~and ~$x_{4}=x_{1}(i_{5}i_{6})$, which $i_{j}$ differ from each other. If $n\ge6$, then $\vert N_{MB_{n}}(S)\vert= n-3+3(n-3)-3=4n-9$. If $n\in \{4,5\}$, then $|{\textstyle \bigcup_{1\le i<j\le4}^{}}(N_{MB_{n}}(x_{i})\cap N_{MB_{n}}(x_{j}))\setminus \{x_{1}\}|\le 2$, which implies that $\vert N_{MB_{n}}(S)\vert\ge \sum_{i=1}^{4} |N_{MB_{n}}(x_{i})\setminus S|-|{\textstyle \bigcup_{1\le i<j\le4}^{}}(N_{MB_{n}}(x_{i})\cap N_{MB_{n}}(x_{j}))\setminus \{x_{2}\}|\ge (n-3)+3\times (n-1)-2=4n-8$.

Scenarios 2: $MB_{n}[S]$ is isomorphic to $ P_{4}$. Without loss of generality, let $P_{4}=x_{1}x_{2}x_{3}x_{4}$. In this case, to avoid the occurrence of odd cycles in $MB_{n}$, we find that $cn(x_{i},x_{i+1})=cn(x_{1},x_{4})=0$ for $i\in[3]$. Moreover, according to Lemma \ref{Lem2-6}, we have $|N_{MB_{n}-S}(x_{j})\cap N_{MB_{n}-S}(x_{j+2}))|\le 1$ for $j\in[2]$. It follows that $\vert N_{MB_{n}}(S)\vert=\sum_{i=1}^{4} |N_{MB_{n}}(x_{i})\setminus S|-|{\textstyle \bigcup_{1\le i<j\le4}^{}}(N_{MB_{n}}(x_{i})\cap N_{MB_{n}}(x_{j}))|\ge 2(n-1)+2(n-2)-2=4n-8$.

\textbf{Case 5:} $\vert E(MB_{n}[S])\vert=4$.

In view of $g(MB_{n})=4$, it is clear that $MB_{n}[S]$ is a 4-cycle. Therefore, can conclude that $\vert N_{MB_{n}}(S)\vert= 4(n-2)=4n-8$.

Subsequently, we construct the possible structure of $G[S]$ and the distribution of its neighbors for the scenario where equality holds in the inequality, as illustrated in Fig \ref{fig5}. In summary, this lemma holds.
\begin{figure}[htbp]
\centering
\includegraphics[width=0.9\textwidth]{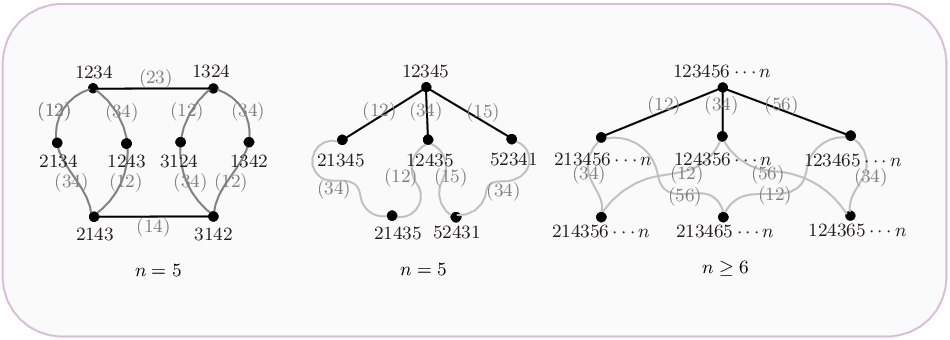}
\caption{ The boundary of inequality is a tight statement}
\label{fig5}
\end{figure}
\end{proof}

\begin{obs}\label{obs2-1}
For each $uv\in E(MB_{4}^{i})$ with $i\in [3]$, we have $|N_{MB_{4}^{4}}(u)|=1$ or $|N_{MB_{4}^{4}}(v)|=1$.
\end{obs}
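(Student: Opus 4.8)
The plan is to argue directly with the string representation of $V(MB_4)=Sym(4)$ and the hierarchical decomposition $MB_4=MB_4^1\cup MB_4^2\cup MB_4^3\cup MB_4^4$, where $MB_4^i$ (that is, $B_3^i$) consists of the permutations $w=w_1w_2w_3w_4$ with $w_4=i$. Recall that the inside edges of $MB_4^i$ join each $w$ to $w(12)$ and to $w(23)$, while the two out-neighbours of $w$ are $w^+=w(14)$, whose rightmost bit equals $w_1$, and $w^-=w(34)$, whose rightmost bit equals $w_3$.

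The first step is to express $|N_{MB_4^4}(w)|$ as a condition on the coordinates of $w$. For $w\in V(MB_4^i)$ with $i\in[3]$, the only candidates for neighbours of $w$ inside $MB_4^4$ are $w^+$ and $w^-$; moreover $w^+\in V(MB_4^4)$ exactly when $w_1=4$, and $w^-\in V(MB_4^4)$ exactly when $w_3=4$. Since $w$ is a permutation, $w_1=4$ and $w_3=4$ cannot both hold, so $|N_{MB_4^4}(w)|\le 1$, with equality precisely when $4\in\{w_1,w_3\}$. As $w_4=i\ne4$, this condition is equivalent to $w_2\ne4$; hence $|N_{MB_4^4}(w)|=1$ iff $w_2\ne4$, and $|N_{MB_4^4}(w)|=0$ iff $w_2=4$.

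Given this reformulation, proving the Observation for an edge $uv\in E(MB_4^i)$ with $i\in[3]$ amounts to ruling out $u_2=v_2=4$. Since $uv$ is an inside edge, either $v=u(12)$, in which case $v_2=u_1$, or $v=u(23)$, in which case $v_2=u_3$. In both cases $u_2=v_2=4$ would force two distinct coordinates of the permutation $u$ to equal $4$, a contradiction, and the claim follows.

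I do not expect any real obstacle here; the argument is a short case check on $Sym(4)$. The only point needing care is fixing the conventions correctly — namely that right-multiplication by the transposition $(14)$ (resp.\ $(34)$) interchanges the entries in positions $1$ and $4$ (resp.\ $3$ and $4$), so that whether $w$ has a neighbour in $MB_4^4$ is governed by its first and third coordinates. Once this is in place, the rest is immediate.
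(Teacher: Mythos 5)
Your proof is correct: the paper states this observation without proof, and your coordinate argument is exactly the intended (and essentially only) verification — $u^+=u(14)$ and $u^-=u(34)$ land in $MB_4^{u_1}$ and $MB_4^{u_3}$ respectively, so $|N_{MB_4^4}(u)|=0$ iff $u_2=4$, and an inside edge $v=u(12)$ or $v=u(23)$ cannot have $u_2=v_2=4$ since $u$ is a permutation. Your conventions match the paper's (where $p(kl)$ swaps the entries in positions $k$ and $l$), so nothing further is needed.
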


\begin{Lem}\label{Lem2-9}
Let $F\subset V(MB_{4})$ with $|F|\le 7$. If $MB_{4}-F$ is disconnected, then $MB_{4}-F$ has a large component $C$ with $|V(C)|\ge 4!-|F|-3$.
\end{Lem}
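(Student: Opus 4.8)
The plan is to prove this by analyzing the structure of a disconnecting set $F$ with $|F| \le 7$ in $MB_4$, which has $24$ vertices and is $4$-regular. I would first invoke Lemma~\ref{lem2-7}: if $|F| \le 6$ then $MB_4 - F$ already has a large component $C$ with $|V(C)| \ge 4! - |F| - 2 \ge 4! - |F| - 3$, so we are done. Hence the only case requiring work is $|F| = 7$. Assume for contradiction that $MB_4 - F$ has no component of order at least $4! - 7 - 3 = 14$; then the vertices outside the largest component number at least $4$, i.e. writing $C = MC(MB_4 - F)$ and $R = V(MB_4) \setminus (F \cup V(C))$, we have $|R| \ge 4$ (and of course $|V(C)| + |R| = 17$, so $|V(C)| \le 13$).

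Next I would bound $|R|$ from above using the neighbor-expansion results. Since $R$ is separated from $C$ by $F$, every neighbor of a vertex of $R$ lies in $F \cup R$, so $N(R) \subseteq F$ where $N(\cdot)$ is taken in $MB_4 - $(nothing), i.e. $N_{MB_4}(R) \setminus R \subseteq F$. If $|R| \ge 4$, pick any $4$-subset $S \subseteq R$; by Lemma~\ref{Lem2-8} (with $n = 4$) we get $|N_{MB_4}(S)| \ge 4n - 8 = 8 > 7 = |F|$. But $N_{MB_4}(S) \setminus R \subseteq N_{MB_4}(R) \setminus R \subseteq F$, so we must have several neighbors of $S$ inside $R$; this forces $R$ itself to be fairly large and to contain substantial internal structure. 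The cleanest route is: $R$ cannot be too large either, because $V(C) \ge \kappa(MB_4)$-type arguments or a counting argument on the $n$-regularity shows the large component dominates. Actually the tighter approach: since $|V(C)| + |R| = 17$ and $|V(C)| \le 13$ gives $|R| \ge 4$, but also $R$ together with $F$ must "surround" $R$, so $|R| \le |V(MB_4)| - |V(C)| - $ (something). I would instead directly show $|R| \ge 4$ is impossible by a more careful case split on $|R| \in \{4, 5, 6, 7, \dots\}$ and the corresponding lower bound on $|N(R)|$; for $|R| \ge 5$ one can extract a $4$-set and use Lemma~\ref{Lem2-8} together with one extra vertex, while for $|R|$ large the complement argument (the large component also needs its boundary inside $F$) yields a contradiction since two disjoint vertex sets both needing boundary $\subseteq F$ with $|F| = 7$ is too restrictive given the expansion bounds.

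The main obstacle will be the borderline configurations where $|R| = 4$ exactly: here Lemma~\ref{Lem2-8} gives $|N_{MB_4}(R)| \ge 8$, but only $7$ of these can lie in $F$, so at least one neighbor of $R$ lies outside $F \cup R$ — yet that neighbor must then lie in $V(C)$, contradicting that $F$ separates $R$ from $C$. Wait — this already gives the contradiction directly, provided $R$ and $C$ are in different components and $F = N(R) \cap N(C)$-type; I must be careful that $N_{MB_4}(R) \setminus R$ genuinely lies in $F$ (true since any edge from $R$ leaving $R$ cannot go to $V(C)$ as they're in different components of $MB_4 - F$, hence goes to $F$). So $|F| \ge |N_{MB_4}(R) \setminus R| \ge 8$, contradicting $|F| \le 7$. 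The remaining delicate point is handling $|R| \in \{5, 6, 7\}$ (and ensuring $|R| \le 7$, which follows since $|V(C)| \ge $ a lower bound — here I would use Lemma~\ref{lem2-7}(2) applied with a slightly smaller $F'$, or Lemma~\ref{lem2-4}, to force $|V(C)|$ large enough); for each such $|R|$ one picks a $4$-subset $S$, applies Lemma~\ref{Lem2-8} to get $|N_{MB_4}(S)| \ge 8$, subtracts the at most $|R| - 4$ vertices of $R \setminus S$ that could absorb neighbors, and checks $8 - (|R| - 4) > 7$ fails only for $|R| \ge 5$, so a sharper argument using the actual structure of $S$ inside $R$ (e.g. counting edges $E(MB_4[R])$ and using regularity: $\sum_{v \in R} d(v) = 4|R| = 2|E(MB_4[R])| + |N(R) \setminus R| \le 2|E(MB_4[R])| + 7$) pins down $|E(MB_4[R])|$ and then Lemma~\ref{Lem2-5-1} / Lemma~\ref{lem2-7}(1) rules out the resulting dense small subgraphs on $\le 7$ vertices. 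Assembling these subcases completes the proof.
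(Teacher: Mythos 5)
Your reduction to $|F|=7$ via Lemma~\ref{lem2-7}(2) is fine, and your endgame for $|R|=4$ (where $R$ is the union of all non-largest components) is exactly the move the paper makes repeatedly: $N_{MB_4}(R)\subseteq F$, Lemma~\ref{Lem2-8} gives $|N_{MB_4}(R)|\ge 4\cdot4-8=8>7$, contradiction. The problem is that you never actually get down to $|R|\le 4$. The paper earns this by decomposing $MB_4$ into the four copies $B_3^i$ (6-cycles), ordering $|F_1|\ge\cdots\ge|F_4|$, and running a case analysis (Table~1, Fig.~\ref{fig3}) that shows in every distribution the vertices cut off from the big component number at most $4$; only then does Lemma~\ref{Lem2-8} finish. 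You have no substitute for that step.

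Concretely, your proposed repair for $|R|\ge 5$ rests on a false identity. You write $4|R|=2|E(MB_4[R])|+|N(R)\setminus R|\le 2|E(MB_4[R])|+7$, but the handshake count gives $4|R|=2|E(MB_4[R])|+|E(R,V\setminus R)|$, and the number of boundary \emph{edges} is not the number of boundary \emph{vertices}: each vertex of $F$ can absorb up to $4$ edges from $R$, so the correct upper bound is $|E(R,V\setminus R)|\le 4|F|=28$. This yields only $|E(MB_4[R])|\ge 2|R|-14$, which is vacuous for $|R|\le 7$, so nothing about $MB_4[R]$ is "pinned down" and the appeal to Lemmas~\ref{Lem2-5-1}/\ref{lem2-7} has nothing to act on. Likewise, your upper bound on $|R|$ ("the complement argument") is only gestured at. Since you yourself note that the naive bound $8-(|R|-4)>7$ fails for $|R|\ge 5$, the cases $|R|\in\{5,6,\dots\}$ remain genuinely open in your argument; you would need either the paper's hierarchical case analysis or a correct isoperimetric-type statement for vertex sets of size $\ge 5$ in $MB_4$ (e.g., ruling out small components of size $\ge 4$ first via Lemma~\ref{Lem2-8}, then handling unions of components of sizes in $\{1,2,3\}$ whose neighborhoods must all fit inside the $7$ vertices of $F$). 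As written, the proof is incomplete.
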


\begin{proof}
Let $F_{i}=F\cap V(MB_{3}^{i})$ for $i \in[4]$. Without lose of generality, note that $|F_{1}|\ge |F_{2}|\ge |F_{3}|\ge |F_{4}|$. It implies $|F_{4}|\le 1$. By Lemma \ref{lem2-1}, we have $\kappa(MB_{3}^{i})=2$ for $i\in [4]$. Base on this fact, it is evident that the subgraph $MB_{3}^{4}-F_{4}$ is connected. For notational convenience, denote by $\mathcal{C}$ the connected component of $MB_{4}-F$ that contains $MB_{3}^{4}-F_{4}$ as a subgraph. According the Lemma \ref{Lem2-5}, we obtain that $|E_{ij}(MB_{4})|=4$ for any two distinct $i,j\in[4]$. If there exists some $k\in[4]$ such that $|F_{k}|\le 1$, we have $|F_{k}|+|F_{4}|\le2 < |E_{ij}(MB_{4})|$, then $MB_{3}^{[k,4]}-F^{[k,4]}$ is connected.

Concluded that $|F_{1}|\ge 2$ and $|F_{3}|\ge 1$. Since $MB_{4}-F$ is disconnected, it is clearly that $|F_{1}|\ge 2$. If $|F_{3}|=0$, then $|F_{4}|=0$ and $MB_{4}^{[3,4]}-F^{[3,4]}$ is connected. For any vertex $u\in V(MB_{3}^{[1,2]})$, Lemma \ref{Lem2-5}(4) states that $u^{+}\in V(MB_{3}^{[3,4]})$ or $u^{-}\in V(MB_{3}^{[3,4]})$. It then follows from this that $MB_{4}-F$ is connected, which is a contradiction.
Based on $|F_{1}|\ge |F_{2}|\ge |F_{3}|\ge |F_{4}|$ and $|F|\le 7$, we can deduce that $2\le|F_{1}|\le 5$, $1\le|F_{2}|\le 3$, $1\le|F_{3}|\le 2$, and $0\le|F_{4}|\le 1$.
We proceed with the following cases analysis based on the cardinality of the vertex set $F_{2}$.

\textbf{Case 1:} $|F_{2}|=1$.

It follows directly that $2\le|F_{1}|\le 5$, $|F_{2}|=|F_{3}|=1$, and $0\le|F_{4}|\le 1$. The validity of $|F_{2}|\le 1$ implies that $MB_{3}^{[2,4]}-F^{[2,4]}$ is connected. Moreover, statement $|V(MB_{3}^{1}-F_{1})|\le 4$ is obvious.  If we suppose $|V(MB_{3}^{1}-F_{1})|= 4$, then by Lemma \ref{Lem2-5}(3), we have $|N_{MB_{4}}(V(MB_{3}^{1}-F_{1}))|\ge 2|V(MB_{3}^{1}-F_{1})|=8>7$, which is a contradiction. This forces us to obtain that $|V(MB_{3}^{1}-F_{1})|\le 3$. Therefore, we conclude that $MB_{4}-F$ has a large component $\mathcal{C}$ with $|V(\mathcal{C})|\ge 4!-|F|-3$.

\textbf{Case 2:} $2\le |F_{2}|\le 3$.

It follows directly that $2\le|F_{1}|\le 4$, $1\le|F_{3}|\le 2$, and $0\le|F_{4}|\le 1$. We now summarize all possible fault vertices distribution scenarios in Table 1 conduct a detailed structural analysis for each case. Obviously, $5\le|F|\le 7$.

\textbf{Case 2.1:} $5\le|F|\leq 6$.
Lemma \ref{lem2-7} guarantees the existence of a large component $\mathcal{C}$ in $G-F$ with $|V(\mathcal{C})| \geq 4!-|F|-2>4!-|F|-3$. This condition is evidently satisfied, thus proving the lemma.

\textbf{Case 2.2:} $|F|=7$.

For clarity, we have categorized this case into five distinct groups, as summarized in Table 1. Each of these groups will be analyzed separately below.

\begin{table}[htbp]
  \centering
  \caption{Cardinality of the Fault Set $F$ and Its Distribution $F_i$ among the $MB_4^i$ Subnetworks}
  \label{tab:fault_distribution_pro}

  \begin{tabular}{>{\columncolor{LightBlue}}c >{\columncolor{LightYellow}}c >{\columncolor{LightOrange}}c c c >{\columncolor{LightGreen}}c c c c c}
    \toprule

    \rowcolor{white}
    $|F|$ & 5 & \multicolumn{3}{c}{6} & \multicolumn{5}{c}{7} \\
    \cmidrule(r){2-2} \cmidrule(lr){3-5} \cmidrule(l){6-10}

    $group$ & \ding{172} & \ding{173} & \ding{174} & \ding{175} & \ding{176} & \ding{177} & \ding{178} & \ding{179} & \ding{180}  \\
     \midrule
    $|F_{1}|$ & 2 & 3 & 2 & 2 & 4 & 3 & 3 & 3 & 2  \\
    \midrule

    $|F_{2}|$ & 2 & 2 & 2 & 2 & 2 & 3 & 2 & 2 & 2  \\
    \midrule

    $|F_{3}|$ & 1 & 1 & 2 & 1 & 1 & 1 & 1 & 2 & 2  \\
    \midrule

    $|F_{4}|$ & 0 & 0 & 0 & 1 & 0 & 0 & 1 & 0 & 1  \\
    \bottomrule
  \end{tabular}
\end{table}

For the cases in groups \ding{176} \ding{177} \ding{178} (see Table 1), it can be seen that $MB_{3}^{[3,4]}-F^{[3,4]}$ is connected and the component $\mathcal{C}$ contains $MB_{3}^{[3,4]}-F^{[3,4]}$ as its subgraph. By Lemma \ref{Lem2-5}(4), there exists a neighbor of any vertex $u\in MB_{3}^{[2]}-F^{[2]}$ in  $MB_{3}^{[3,4]}$. Based on $|F^{[3,4]}|\le 2$, it follows that there are at most two vertices of $ MB_{3}^{i}-F^{i}$ with any $i\in{[2]}$ disconnects to $MB_{3}^{[3,4]}-F^{[3,4]}$. Thus, $|V(MB_{4})-F-V(\mathcal{C})|\le 4$. If $|V(MB_{4})-F-V(\mathcal{C})|= 4$, then $|F|\ge|N_{MB_{4}}(V(MB_{4})-F-V(\mathcal{C}))|\ge 4\times4-8=8>|F|$ by Lemma \ref{Lem2-8}, a contradiction. Thus, $|V(MB_{4})-F-V(\mathcal{C})|\le 3$ and $|V(\mathcal{C})| \geq 4!-|F|-3$.

For the cases in group \ding{179}, we present an exhaustive enumeration of all possible configurations of $MB_{3}^{1}-F_{1}$ and $MB_{3}^{i}-F_{i}$ (where $i \in [2,3]$), illustrated in Fig.\ref{fig3}(a,b,c) and Fig.\ref{fig3}(d,e,f), respectively.

\begin{figure}[htbp]
\centering
\includegraphics[width=0.75\textwidth]{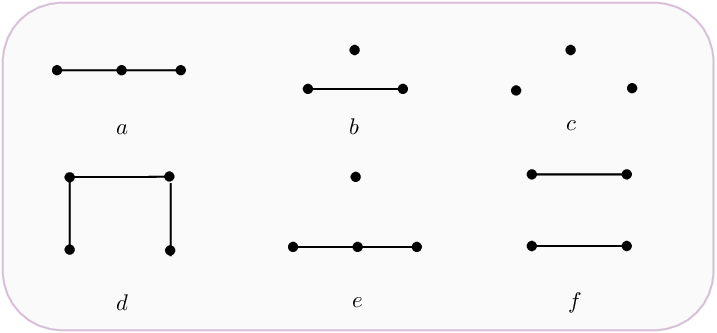}
\caption{The collection ${ MB_{4}^{i} - F_{i} : i \in [3] }$ of all possible structures.}
\label{fig3}
\end{figure}

Since $|F_{4}|=0$, we have $MB_{3}^{4}-F_{4}$ is connected and the component $\mathcal{C}$ contains $MB_{3}^{4}-F_{4}$ as its subgraph. Let $\mathcal{C}_{i}$ be the component containing edges in $MB_{3}^{i}-F_{i}$ with $i\in [3]$. By Lemma \ref{Lem2-5}(1)(3), we obtain that $|E_{14}(MB_{4})|=2(n-2)!=4$ and there exists some vertex $u\in V(MB_{3}^{1}-F_{1})$ connecting to  $MB_{3}^{4}-F_{4}$, which implies $|V(MB_{3}^{1}-F_{1}-V(\mathcal{C})|\le 2$.   For $i\in [2,3]$, according to observation \ref{obs2-1}, it follows that the component $\mathcal{C}_{i}$ is connected to $MB_{3}^{4}-F_{4}$ and $|V(MB_{3}^{i}-F_{i}-V(\mathcal{C}))|\le 1$(see Fig.\ref{fig3} d,e,f). Therefore, $|V(MB_{4}-F-V(\mathcal{C}))|\le 4$. If $|V(MB_{4}-F-V(\mathcal{C}))|= 4$, then $|F|\ge|N_{MB_{4}}(V(MB_{4}-F-V(\mathcal{C})))|\ge 4\times4-8=8>|F|$ by Lemma \ref{Lem2-8}, a contradiction. Thus, $|V(MB_{4})-F-V(\mathcal{C})|\le 3$ and $|V(\mathcal{C})| \geq 4!-|F|-3$.

For the cases in group \ding{180}, we present an exhaustive enumeration of all possible configurations of $MB_{3}^{i}-F_{i}$ with $i \in [3]$(see Fig.\ref{fig3}(d,e,f)).

\textbf{Case 2.2.1:} $MB_{3}^{i}-F_{i}$ is connected for any $i\in [3]$.

Obviously, the subgraph $MB_{3}^{i}-F_{i}$ is isomorphic to $P_{4}$. In view of that $|E_{i4}(MB_{4})|=2(n-2)!=4> |F_{i}|+|F_{4}|$, we obtain that $MB_{4}-F$ is connected, a contradiction.

\textbf{Case 2.2.2:} There exist exactly one $i\in [3]$  such that $MB_{3}^{i}-F_{i}$ is disconnected.

Without lose of generality, let $MB_{3}^{1}-F_{1}$ be disconnected. Obviously, the subgraph $MB_{3}^{i}-F_{i}$ is isomorphic to $P_{4}$ for $i\in [2,3]$. Since $|E_{i4}(MB_{4})|=2(n-2)!=4\ge |F_{i}|+|F_{1}|$ for $i\in [2,3]$, we obtain that $MB_{3}^{[2,4]}-F^{[2,4]}$ is connected and the component $\mathcal{C}$ contains $MB_{3}^{[2,4]}-F^{[2,4]}$ as its subgraph. It follows that $|V(MB_{3}-F-V(\mathcal{C}))|\le |V(MB_{3}^{1}-F_{1})|=4$. If $|V(MB_{4}-F-V(\mathcal{C}))|= 4$, then $|F|\ge|N_{MB_{4}}(V(MB_{4}-F-V(\mathcal{C})))|\ge 4\times4-8=8>|F|$ by Lemma \ref{Lem2-8}, a contradiction. Thus, $|V(MB_{4}-F-V(\mathcal{C}))|\le 3$ and $|V(\mathcal{C})| \geq 4!-|F|-3$.

\textbf{Case 2.2.3:} There exist exactly two $i\in [3]$ such that $MB_{3}^{i}-F_{i}$ is disconnected.

Without lose of generality, let $MB_{3}^{1}-F_{1}$ and $MB_{3}^{2}-F_{2}$ be disconnected. Obviously, the subgraph $MB_{3}^{3}-F_{3}$ is isomorphic to $P_{4}$ and $MB_{3}^{[3,4]}-F^{[3,4]}$ is connected. Moreover, the component $\mathcal{C}$ contains $MB_{3}^{[3,4]}-F^{[3,4]}$ as its subgraph.  Let $\mathcal{C}_{i}$ be a component containing edges in $MB_{3}^{i}-F_{i}$ with $i\in [2]$. Since all possible configurations $d,e,f$ of $MB_{3}^{i}-F_{i}$ have at least two edges, according to observation \ref{obs2-1}, it follows that there is at least one component $\mathcal{C}_{i}$ is connected to $MB_{3}^{4}$ (see Fig.\ref{fig3} d,e,f). Therefore, $|V(MB_{3}^{i}-F_{i}-V(\mathcal{C}))|\le 2$ with $i\in [2]$. Moreover, $|V(MB_{4}-F-V(\mathcal{C}))|\le |V(MB_{3}^{1}-F_{1}-V(\mathcal{C}))|+|V(MB_{3}^{2}-F_{2}-V(\mathcal{C}))|\le 4$. If $|V(MB_{4}-F-V(\mathcal{C}))|=4$, then $|F|\ge|N_{MB_{4}}(V(MB_{4}-F-V(\mathcal{C})))|\ge 4\times4-8=8>|F|$ by Lemma \ref{Lem2-8}, a contradiction. Thus, $|V(MB_{4})-F-V(\mathcal{C})|\le 3$.

\textbf{Case 2.2.4:} $MB_{3}^{i}-F_{i}$ is disconnected any $i\in [3]$.

Obviously, the component $\mathcal{C}$ contains $MB_{3}^{4}-F_{4}$ as its subgraph. Let $\mathcal{C}_{i}$ be a component containing edges in $MB_{3}^{i}-F_{i}$ with $i\in [3]$. Similar to the proof of Case 2.2.3, combining the fact that any $u\in V(MB_{3}^{4})$ has two neighbors in $MB_{3}^{[3]}$, there is at most two component $\mathcal{C}_{i}$ is disconnected to $MB_{3}^{4}$ and $|V(MB_{4})-F-V(\mathcal{C})|\le 4$ (see Fig.\ref{fig3} d,e,f) . Similarly, $|V(MB_{4})-F-V(\mathcal{C})|\neq 4$, Thus, $|V(MB_{4})-F-V(\mathcal{C})|\le 3$.

In summary, the lemma holds.
\end{proof}

\section{Cyclic connectivity of $UG_{n}$}

In this section, we investigate the cyclic connectivity of $UG_n$.

\begin{Lem}\label{lem3-1}
For $n\ge 4$, $\kappa_{c}(UG_n)\le 4n-8$.
\end{Lem}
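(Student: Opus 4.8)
The goal is an upper bound $\kappa_c(UG_n)\le 4n-8$, which means exhibiting a vertex set $S$ of size $4n-8$ whose removal disconnects $UG_n$ into pieces at least two of which contain cycles. The natural strategy is to localize around a short cycle. By Lemma~\ref{Lem2-5-1}, $UG_n$ has $4$-cycles, so fix a $4$-cycle $C = u_1u_2u_3u_4u_1$. The plan is to take $S = N_{UG_n}(V(C)) \setminus V(C)$, the external neighborhood of the four cycle vertices, so that $C$ becomes an isolated $4$-cycle in $UG_n - S$, and then argue that (i) $|S| = 4n-8$ and (ii) the rest of the graph, $UG_n - S - V(C)$, still contains a cycle (indeed is connected and large).

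\textbf{Step 1: Counting $|S|$.} Each $u_i$ has degree $n$, and within $C$ it has two neighbors, so it contributes $n-2$ external neighbors. Thus $\sum_i |N(u_i)\setminus V(C)| = 4(n-2) = 4n-8$. To get equality $|S| = 4n-8$ I must show these external neighborhoods are pairwise disjoint. For adjacent pairs $u_i, u_{i+1}$, a common neighbor would create a triangle, impossible since $UG_n$ is triangle-free (bipartite). For the two opposite pairs $u_1,u_3$ and $u_2,u_4$: here I invoke the structure of $4$-cycles. By Lemma~\ref{Lem2-5-1}, the edges of $C$ alternate between two transposition-labels, say $u_2 = u_1(ij)$, $u_3 = u_2(kl)$, $u_4 = u_3(ij)$, $u_1 = u_4(kl)$, with $i,j,k,l$ distinct. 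A common neighbor $w$ of $u_1$ and $u_3$ would give a second $4$-cycle $u_1 w u_3 u_2 u_1$ (or $u_1 w u_3 u_4 u_1$); one then checks, again via Lemma~\ref{Lem2-5-1} applied to that $4$-cycle, that the two edges $u_1w$ and $u_3w$ would have to carry the labels $(ij)$ and $(kl)$ in some order, forcing $w \in \{u_2, u_4\}$. Hence $u_1$ and $u_3$ have no common neighbor outside $V(C)$, and likewise for $u_2,u_4$. Therefore $|S| = 4n-8$. (Alternatively, since $\mathrm{cn}(u_1,u_3)\le 2$ by Lemma~\ref{lem2-1} and $u_2,u_4$ are already two common neighbors, there are no others.)

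\textbf{Step 2: $C$ is a component of $UG_n - S$ and the complement has a cycle.} By construction every edge leaving $V(C)$ goes into $S$, so $C = u_1u_2u_3u_4u_1$ is a connected component of $UG_n - S$ that contains a cycle. It remains to check that $UG_n - S - V(C) = UG_n - (S \cup V(C))$ still contains a cycle. Here $|S \cup V(C)| = 4n-8+4 = 4n-4$. For $n \ge 4$ this is at most $pn - \frac{p(p+1)}{2}$ with a suitable $p$ (e.g.\ $p=4$ gives $4n-10$, which is not quite enough, so take $p=5$: $5n-15 \ge 4n-4 \iff n\ge 11$; for small $n$ one argues directly). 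Actually the cleaner route: use Lemma~\ref{lem2-4} or just note $UG_n - S - V(C)$ is huge ($n! - 4n+4$ vertices) and, being an $n$-regular-ish dense graph minus a small set, certainly not a forest — e.g.\ one exhibits an explicit $4$-cycle disjoint from $N[V(C)]$, which exists since $N[V(C)]$ has only $4n-4$ vertices while $UG_n$ has $n!$ vertices and many $4$-cycles. So $UG_n - S$ has at least two components with cycles, giving $\kappa_c(UG_n) \le |S| = 4n-8$.

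\textbf{Main obstacle.} The delicate point is Step~1 — proving that the opposite pairs $u_1,u_3$ and $u_2,u_4$ have exactly the two common neighbors $\{u_2,u_4\}$ and $\{u_1,u_3\}$ respectively and no more, so that the neighborhoods of the $C$-vertices overlap only inside $V(C)$ and $|S|$ is exactly $4n-8$ rather than something smaller. This is where the fine structure of $4$-cycles in $UG_n$ (Lemma~\ref{Lem2-5-1}) and the bound $\mathrm{cn}(u,v)\le 2$ (Lemma~\ref{lem2-1}) do the work; everything else (degree count, triangle-freeness, largeness of the complement) is routine. One should also double-check the special case $n=4$, where $UG_4$ could be $MB_4$ and the complement $MB_4 - S - V(C)$ has only $24 - 12 - 4 = 8$ vertices, to confirm it still contains a cycle — a small finite check using the structure of $MB_4$.
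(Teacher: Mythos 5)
Your construction is correct in outline but follows a genuinely different route from the paper. The paper's proof is essentially a two-line observation: by Lemma~\ref{lem2-2} there exists a minimum $2$-good-neighbor cut $F$ with $|F|=\kappa^2(UG_n)=4n-8$; since every vertex of $UG_n-F$ retains at least two neighbors, every component of $UG_n-F$ has minimum degree at least $2$ and hence contains a cycle, and there are at least two components, so $F$ is already a cyclic vertex cut. Your approach instead builds the cut explicitly by isolating a $4$-cycle $C$ and taking $S=N(V(C))\setminus V(C)$; this is more self-contained (it does not import the value of $\kappa^2$) and your Step~1 is airtight in its parenthetical form --- $cn(u_1,u_3)\le 2$ by Lemma~\ref{lem2-1}, and $u_2,u_4$ already account for both common neighbors, so the four external neighborhoods are pairwise disjoint and $|S|=4(n-2)$ exactly. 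The one place you should tighten is Step~2: ``huge and dense, certainly not a forest'' is not an argument, and Lemma~\ref{lem2-4} with $p=5$ only covers $n\ge 11$ as you note. A uniform fix is an edge count: $UG_n-N[V(C)]$ has $n!-(4n-4)$ vertices and at least $\tfrac{n\cdot n!}{2}-(4n-4)n+(4n-4)$ edges (the subtracted term overcounts each of the at least $4+(4n-8)$ edges internal to $N[V(C)]$ once), and the inequality ``edges $\ge$ vertices'', which forces a cycle, reduces to $n!\ge 8(n-1)$, valid for all $n\ge 4$ (with equality at $n=4$, where your arithmetic $24-12-4=8$ should read $24-8-4=12$ remaining vertices). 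With that repair your proof is complete and arguably more informative than the paper's, at the cost of being longer.
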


\begin{proof}
Suppose that $F$ be a minimum 2-good neighbor cut with $|F|=4n-8$. Obviously, the graph $UG_n-F$ is disconnected and the number of its components  $\omega(UG_n-F)\ge 2$. Moreover, any vertex $u\in UG_n-F$ has at two neighbors in $UG_n-F$. That is, $\delta(UG_n-F)\ge 2$. According to the relationship between minimum degree and cycles, we know that every component of $UG_n-F$ contains at least a cycle. In view of $\omega(UG_n-F)\ge 2$, there are at two components containing cycles in $UG_n-F$. Therefore, the vertex subset $F$ is a cyclic vertex cut of $UG_n$ and $\kappa_{c}(UG_n)\le|F|=4n-8$.
\end{proof}

\begin{thm}\label{lem3-2}
For $n\ge 4$, $\kappa_{c}(UG_{n})=4n-8$.
\end{thm}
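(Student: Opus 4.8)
The plan is to establish the lower bound $\kappa_c(UG_n)\ge 4n-8$, which combined with Lemma~\ref{lem3-1} finishes the proof. The natural route is to verify that $UG_n$ satisfies the three hypotheses of Lemma~\ref{lem2-3} with $l=n$ and $g=4$: since $UG_n$ is $n$-regular, triangle-free, bipartite with girth $4$, condition (1) becomes $4n-8\le 4(n-2)$, which holds with equality; condition (2) is exactly Lemma~\ref{lem2-1}; so the entire weight of the argument falls on condition (3), namely that for every $I\subset V(UG_n)$ with $|I|\le 4(n-2)-1=4n-9$, the largest component $MC(UG_n-I)$ satisfies $|MC(UG_n-I)|\ge |V(UG_n)|-|I|-4$, i.e.\ at most $3$ vertices lie outside both $I$ and the big component. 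Once condition (3) is proved, Lemma~\ref{lem2-3} gives $\kappa_c(UG_n)=\kappa^2(UG_n)=4n-8$ by Lemma~\ref{lem2-2}.

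First I would separate the easy range from the structural core. By Lemma~\ref{lem2-4} with $p=3$, any fault set $F$ with $|F|\le 3n-6$ leaves a big component with at most $2$ vertices outside it, which already beats the required bound. So it suffices to treat $3n-5\le |I|\le 4n-9$. For this regime I would use the hierarchical decomposition $UG_n=\bigcup_{i\in[n]}UG_{n-1}^i$ (or $\bigcup_{i\in[n]}B_{n-1}^i$ when $UG_n\cong MB_n$), set $I_i=I\cap V(UG_{n-1}^i)$, and order $|I_1|\ge\cdots\ge|I_n|$. The crux is a counting/pigeonhole argument: because $|I|\le 4n-9$ is small relative to $n$, most copies $UG_{n-1}^i$ satisfy $|I_i|<\kappa(UG_{n-1}^i)=n-1$ (using Lemmas~\ref{lem2-2} and~\ref{Lem2-5}(1)), so $UG_{n-1}^i-I_i$ is connected; and since there are many cross edges $|E_{ij}|$ between copies (Lemma~\ref{Lem2-5}(2)), these surviving copies merge into one large component $\mathcal{C}$. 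The remaining job is to bound the number of vertices in the "heavy" copies that fail to attach to $\mathcal{C}$: if a set $T$ of survivors outside $\mathcal{C}$ had $|T|\ge 4$, then $N_{UG_n}(T)\subseteq I$ would force $|I|\ge 4n-8$ (via Lemma~\ref{Lem2-8}-type neighbor expansion, or Lemma~\ref{lem2-4}), a contradiction. The base case $n=4$ must be handled separately and is exactly Lemma~\ref{Lem2-9}, which gives $|MC(MB_4-F)|\ge 4!-|F|-3$ for $|F|\le 7=4\cdot4-9$; one also checks the non-$MB_4$ variant $UG_4$ similarly (it may coincide with $MB_4$ or need an analogous small-case check).

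I would structure the induction on $n$: assume condition (3) holds for $UG_{n-1}$ (so each $UG_{n-1}^i-I_i$ has a big component leaving $\le 3$ vertices aside), then bootstrap to $UG_n$. The delicate bookkeeping is: (a) showing the big components of the various copies all lie in a single component $\mathcal{C}$ of $UG_n-I$ — here I would argue that if two large copy-components were separated, the cut between them would have to absorb $\ge 4n-8$ vertices; (b) controlling the "debris" — vertices in small copy-components plus the $\le 3$ leftover vertices per copy — and showing the total debris outside $\mathcal{C}$ is at most $3$. For step (b) the key inequality is that any debris set $T$ with $|T|=4$ has $|N(T)|\ge 4n-8$ in the $MB_n$ case (Lemma~\ref{Lem2-8}) or the analogous bound in the general $UG_n$ case, exceeding $|I|$; debris sets of size $\le 3$ are allowed.

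The main obstacle I anticipate is step (a) together with the case $UG_n\cong MB_n$, where each vertex has \emph{two} out-neighbors rather than one, so the decomposition is into $B_{n-1}^i$ (bubble-sort graphs, with $\kappa(B_{n-1})=n-2$, slightly smaller) and the cross-edge structure is governed by Lemma~\ref{Lem2-5}(3)(4) and Lemma~\ref{Lem2-6}; ensuring the surviving $B_{n-1}^i$ copies still coalesce despite the smaller connectivity threshold requires careful use of the $2(n-2)!$ cross-edge count and property (4) that out-neighbors of $MB_n^{[2]}$ reach $MB_n^{[3,n]}$. A secondary obstacle is making the "at most $3$ leftover" bound tight across the induction: the proof must never lose more than $3$, so whenever a heavy copy $UG_{n-1}^i$ contributes leftover vertices, those must be shown to attach to $\mathcal{C}$ through cross edges unless they are genuinely isolated by $I$, and the total over all copies plus debris cannot exceed $3$ — this is exactly where the slack $4n-9$ versus $4n-8$ is consumed and where a Lemma~\ref{lem2-4}-style argument ($p=3$ on the residual fault budget) or a direct neighbor-expansion estimate must be invoked carefully.
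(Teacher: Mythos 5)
Your overall architecture matches the paper's: upper bound from Lemma~\ref{lem3-1}, lower bound by feeding Lemmas~\ref{lem2-1} and~\ref{lem2-2} into Lemma~\ref{lem2-3}, with condition (3) correctly identified as the crux. But there are two problems. First, you mis-state condition (3): with $g=4$ it asks $|MC(UG_n-I)|\ge |V(UG_n)|-|I|-4$, i.e.\ at most \emph{four} vertices may lie outside $I$ and the big component, not three. The ``at most $3$'' version you set out to prove is actually false for $n\ge 6$: the sharpness of Lemma~\ref{Lem2-8} provides a $4$-set $S$ in $MB_n$ with $|N(S)|=4n-9$, so a fault set of size $4n-9$ can isolate exactly four vertices. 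Second, and more importantly, the entire difficulty of the theorem lives in verifying condition (3) on the range $3n-5\le|I|\le 4n-9$, and there your proposal is only a plan with acknowledged obstacles (coalescing the big components of the copies, bounding the debris, the two-out-neighbor complication in the $MB_n$ case). None of that bookkeeping is carried out, so the lower bound is not actually established.

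The paper closes exactly this range with essentially no work: for $n\ge 6$ it applies Lemma~\ref{lem2-4} with $p=5$ rather than $p=3$, since $|I|\le 4n-9\le 5n-15=5n-\tfrac{5\cdot 6}{2}$, which immediately yields a large component with at most $p-1=4$ vertices outside it --- precisely the $-g$ slack that condition (3) permits. No induction on $n$ and no decomposition into copies is needed for $n\ge 6$. The decomposition-style analysis you sketch is reserved in the paper for the two small cases that Lemma~\ref{lem2-4} cannot reach: $n=5$ (where $4n-9=11>5n-15$) is handled by a direct contradiction argument on the copies $UG_4^i\cong MB_4$ using Lemmas~\ref{Lem2-8} and~\ref{Lem2-9} together with the observation that any component separated by a cyclic cut must contain a $4$-cycle and hence at least $4$ vertices, forcing $|F|\ge 4\cdot 3=12$; and $n=4$ follows at once from Lemma~\ref{Lem2-9}, since a big component leaving at most $3$ vertices cannot coexist with a separated $4$-cycle. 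If you replace your $p=3$ invocation by $p=5$ and add the bespoke $n=4,5$ arguments, the induction machinery you anticipate becomes unnecessary.
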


\begin{proof}
Since the 2-good neighbor connectivity $\kappa^2(UG_n)=4n-8$ and the girth $g(UG_n)=4$, based on the fact that $UG_n$ is $n$-regular, we obtained $\kappa^2(UG_n)=4n-8=g(n-2)$. For $n\ge 6$, given a subset $F\subset V(UG_{n})$ with $|F|\le g(n-2)-1=4n-9$, we have $|F|\le 5n-15=5n-\frac{5\times6}{2}$. By Lemma~\ref{lem2-4}, $UG_{n}-F$ has one large component $MC(UG_{n}-F)$ and a number of small components with at most $4$ vertices in total. Obviously, we get $V(MC(UG_{n}-F))\ge |V(UG_{n})|-|F|-4$. Combining Lemma \ref{lem2-1} with Lemma \ref{lem2-3}, the graph $UG_{n}$ satisfies all conditions of Lemma \ref{lem2-3}. According to Lemma \ref{lem2-2} and Lemma \ref{lem2-3}, we have $\kappa_{c}(UG_{n})=\kappa^{2}(UG_{n})=4n-8$.

In the case $n=5$, based on the fact that $UG_{n}$ has no odd cycles, we find that $G(\mathcal{T})$ connects and it contains a 4-cycle and a leaf. By the hierarchical structure of $UG_n$, the graph $UG_5$ can be decomposed into 5 vertex-disjoint subgraphs $UG_{4}^{i}$ with $i\in[5]$ and $UG_{4}^{i}$ is isomorphic to $MB_4$, where $i\in [5]$.

Suppose to the contrary that there is a cyclic vertex cut $F\subset V(UG_{n})$ such that $|F|\le 4n-9=11$. Let $F_{i}=F\cap UG_{4}^{i}$ for $i\in[5]$. For convenience, note that $I=\{i\in [5]\colon UG_{4}^{i}-F_i ~is~ disconnected\}$, $J=[5]\setminus I$, $F_{I}=\bigcup_{i\in I}F_{i}$, $F_{J}=\bigcup_{j\in J}F_{j}$, $UG_{4}^{I}=\bigcup_{i\in I}UG_{4}^{i}$, $UG_{4}^{J}=\bigcup_{j\in J}UG_{4}^{j}$. By Lemma \ref{lem2-2}, it is evidence that $\kappa (UG_{4}^{i})=4$ for $i\in [5]$. Assume that $\vert I\vert\ge 3$, it follows that $\vert F\vert\ge 4\times3=12>11\ge \vert F\vert$, a contradiction. Thus, $\vert I\vert\le 2$. By the definition of $J$, $UG_{4}^{j}-F_{j}$ is connected for any $j\in J$. There are $(n-2)!=6$ cross edges between $UG_{4}^{i}$ and $UG_{4}^{j}$  for any two distinct integers $i,j\in J$. Obviously, $UG_{4}^{J}-F_{J}$ is connected. If $|I|=0$, then $|J|=5$. Furthermore, $UG_{4}^{J}-F_J$ is connected, which is a contradiction. Hence, $1\le|I|\le2$. Consider the following cases depending on the cardinality of $I$.

\textbf{Case 1.} $|I|=1$.

Without loss of generality, suppose $I=\{1\}$. It means that $UG_{4}^{J}-F_{J}$ is connected.
Moreover, the vertex subset $F$ is a cyclic vertex cut, it implies that there exists a component containing cycles in $UG_{4}^{1}-F^{1}$ and it does not connect to $UG_{4}^{J}-F_J$, denoted by $\mathcal{C}$. Because each vertex $u\in V(\mathcal{C})$ has a unique neighbor $u_1$ outside of $UG_{4}^{1}$, any two vertices $w,v\in V(UG_{4}^{J}-F_J)$ have no common neighbor in $V(\mathcal{C})$. It is evident that $4\le|F\setminus F_{1}|\le 7$ and $4\le|F_{1}|\le 7$. By Lemma~\ref{Lem2-9}, the graph $UG_{4}^{1}-F_{1}$ has one large component $\mathcal{C}$ with $|V(\mathcal{C})|\ge 4!-|F_{1}|-3$. It implies that  $|V(UG_{4}^{1}-F_{1}-\mathcal{C})|\le 3$. It follows that $|V(\mathcal{C})|\ge|V(UG_{4}^{1})|-|F_{1}|-3\ge 24-7-3=14> |F\setminus F_{1}|$. Clearly, the large component $\mathcal{C}$ connects to $UG_{4}^{J}-F_J$. Given that $F$ is a cyclic vertex cut, we obtain that there exists a 4-cycle in  $UG_{4}^{1}-F_{1}-\mathcal{C}$ and $|V(UG_{4}^{1}-F_{1}-\mathcal{C})|\ge4$,  which is a contradiction.

\textbf{Case 2.} $|I|=2$.

Without loss of generality, set $I=\left \{ 1,2 \right \} $. Since each of $UG_{4}^{1}-F_{1}$ and $UG_{4}^{2}-F_{2}$ is disconnected. Given that $|F|\le 11$, combining Lemma \ref{Lem2-5}, we find that $4\le \vert F_{i}\vert\le 7$ with $i\in [2]$.

\textbf{Case 2.1.} $4\le \vert F_{i}\vert\le6$ for each $i\in[2]$.

For $i\in [2]$, by Lemma \ref{Lem2-6}, the graph $UG_{4}^{i}-F_{i}$ has a component $\mathcal{C}_i$ with $|V(\mathcal{C}_{i})|\ge 4!-|F_{i}|-2$. It implies that  $|V(UG_{4}^{i}-F_{i}-\mathcal{C}_{i})|\le 2$. In view of $|V(\mathcal{C}_{i})|\ge |V(UG_{4}^{i}-F_{i}-2)|\ge 16$, we find that $|V(\mathcal{C}_{i})|-|N(V(\mathcal{C}_{i}))\cap V(UG_{4}^{3-i})|-|F\setminus F_{3-i}|\ge 16-6-7\ge 3$, similar to case 1, the component $\mathcal{C}_{i}$ connects to $UG_{4}^{J}-F_J$. Since the vertex set $F$ is a cyclic vertex subset, there exists a 4-cycle in the graph $UG_{4}^{I}-F_I-\mathcal{C}_{1}-\mathcal{C}_{2}$ and the graph is isomorphic to $C_{4}$. It follows that $|F|\ge |N_{UG_{5}}(V(UG_{4}^{I}-F_I-\mathcal{C}_{1}-\mathcal{C}_{2}))|=4\times(5-2)=12>11=|F|$, a contradiction.

\textbf{Case 2.2.} $|F_{1}|=7$(or $|F_{2}|=7$).

Given $|F|\le 11$ and $|F_{2}|\ge 4$, we have $|F_{2}|=4$. Lemma~\ref{Lem2-6} indicates that $V(UG_{4}^{2}-F_{2})$ has two components, one of which is an isolated vertex $v$. Clearly, the graph $UG_{4}^{2}-F_{2}-\{v\}$ connects to $UG_{4}^{J}-F_J$. Moreover, according to the Lemma~\ref{Lem2-9}, we have $V(UG_{4}^{1}-F_{1})$  has a large component  $\mathcal{C}$ with $|V(\mathcal{C})|\ge 4!-|F|-3$. Clearly, the component $\mathcal{C}$ connects to $UG_{4}^{J}-F_J$.  Let $M=UG_{5}-F-(UG_{4}^{J}-F_J)-(UG_{4}^{2}-F_{2}-\{v\})-\mathcal{C}$. It then follows from the preceding analysis that $|V(M)|\le 4$. Note that $F$ is a cyclic vertex cut, there exists a 4-cycle in $M$. It implies $|V(M)|=4$ and the  4-cycle $M$ does not connect to $UG_{4}^{J}-F_J$. It follows that $|F|\ge |N_{UG_{5}}(V(M))|=4\times(5-2)=12>11=|F|$, a contradiction. So, $\kappa_{c}(UG_{5})\ge 4n-8$. Thus, $\kappa_{c}(UG_{5})=4n-8$.

In the case $n=4$, by the hierarchical structure of $UG_{4}$, we have the graph $UG_{4}$ is isomorphic to $MB_{4}$. We only to proof $\kappa_{c}(MB_{4})\ge 8$. Suppose to the contrary that there is a cyclic vertex cut $F\subset V(MB_{4})$ such that $|F|\le 4n-9=7$. By Lemma \ref{Lem2-9}, the graph $MB_{4}-F$ has a large component $\mathcal{C}$ with $|V(\mathcal{C})|\ge 4!-|F|-3$. It implies that  $|V(MB_{4}-F-\mathcal{C})|\le 3$. However, since the vertex subset $F$ is a cyclic vertex cut, combining this and $g(MB_{4})=4$, we find that there exists a 4-cycle and $|V(MB_{4}-F-\mathcal{C})|\ge 4$, which is a contradiction. So, $\kappa_{c}(MB_{4})\ge 4n-8$.  Thus, $\kappa_{c}(UG_{4})=4n-8$.
\end{proof}

\section{Conclusion}

Disjoint paths are pivotal for improving network transmission efficiency and fault tolerance. The presence of cycles in interconnection networks guarantees redundant paths between vertex pairs, rendering cyclic connectivity an essential metric for evaluating network robustness. This parameter has attracted considerable scholarly attention, achieving fruitful research results. Moreover, cayley graphs exhibit several ideal properties for interconnection networks, making them highly valuable for research. One of its key features is high fault tolerance, which ensures ensures operational robustness. In this paper, we determine the exact cyclic connectivity of $UG_{n}$ as $\kappa_{c}(UG_{n})=4n-8$ for ~$n\ge 4$.

Furthermore, we compare the cyclic connectivity of $UG_{n}$ with other connectivity parameters, it shows that the cyclic connectivity of $UG_{n}$ has higher reliability. Our future work will explore the cyclic connectivity of interconnection networks with more complex structures and broader application prospects. It would be beneficial to provide valuable insights into measuring the fault tolerance of interconnection networks.

\section*{Acknowledgements}

This work was supported by the National Science Foundation of China (Nos.12261074 and 12461065), the Middle-aged and Young Research Fund of Qinghai Normal University (No. 2025QZR11).

\section*{Declaration of competing interest}

The authors declare that they have no known competing financial interests or personal relationships that could have
appeared to influence the work reported in this paper.

\section*{Data availability}

No data was used for the research described in the article.



\begin{thebibliography}{1}
\parskip -3pt

\bibitem{1}
F. Harary, Conditional connectivity, Networks, 13(3)(1983): 347-357.

\bibitem{2}
S. Latifi, M. Hegde, M. Naraghi-Pour, Conditional connectivity measures for large multiprocessor systems, IEEE Transactions on Computers, 43(2)(1994): 218-222.

\bibitem{3}
W. Yang, H. Li, J. Meng, Conditional connectivity of Cayley graphs generated by transposition trees, Information Processing Letters, 110(2010): 1027-1030.

\bibitem{4}
L. Chen, X. Li, M. Ma, Reliability assessment for modified bubble-sort network, Discrete Applied Mathematics, 307(2022): 88-94.

\bibitem{5}
Q. Zhou, H. Liu, B. Cheng, Y. Wang, Y. Han, J. Fan, Fault tolerance of recursive match networks based on $g$-good-neighbor fault pattern, Applied Mathematics and Computation, 461(2024): 128318.

\bibitem{6}
H. Zhang, S. Zhou, J. Liu, Q. Zhou, Z. Yu, Reliability evaluation of DQcube based on $g$-good neighbor and $g$-component fault pattern, Discrete Applied Mathematics, 305(2021): 179-190.

\bibitem{7}
S. Zhao, W. Yang, S. Zhang, Component connectivity of hypercubes, Theoretical Computer Science, 640(2016): 115-118.

\bibitem{8}
J. Chang, K. Pai, R. Wu, J. Yang, The 4-component connectivity of alternating group networks, Theoretical Computer Science, 766(2019): 38-45.

\bibitem{9}
L. Xu, S. Zhou, W. Yang, Component connectivity of Cayley graphs generated by transposition trees, International Journal of Parallel, Emergent and Distributed Systems, 35(1)(2020): 103-110.

\bibitem{10}
G. Zhang, Z. Yue, D. Wang, 1-Extra 3-component edge connectivity of modified bubble-sort networks, The Journal of Supercomputing, 81(2025): 164.
%
\bibitem{11}
J. F\`{a}brega, M. Fiol, On the extra connectivity of graphs, Discrete Applied Mathematics, 155(1996): 49-57.

\bibitem{12}
Q. Zhou, B. Cheng, J. Zhou, J. Yu, Y. Wang, J. Fan, Reliability evaluation for a class of recursive match networks, Theoretical Computer Science, 981(2024): 114253.

\bibitem{13}
N. Robertson, Minimal cyclic-4-connected graphs, Transactions of the American Mathematical Society, 284(1984): 665-684.

\bibitem{14}
Q. Liu, Z. Zhang, Z. Yu,  Cyclic connectivity of star graph, Discrete Mathematics Algorithms and Applications, 3(2011): 433-442.

\bibitem{15}
E. Cheng, L. Lipt\'ak, K. Qiu, Z. Shen, Cyclic vertex-connectivity of Cayley graphs generated by transposition trees, Graphs and Combinatorics, 29(2013): 835-841.

\bibitem{16}
J. Liang, D. Lou, Z. Qin, Q. Yu, A polynomial algorithm determining cyclic vertex connectivity of 4-regular graphs, Journal of Combinatorial Optimization, 38(2019): 589-607.

\bibitem{17}
D. Qin, Y. Tian, L. Chen, J. Meng, Cyclic vertex-connectivity of Cartesian product graphs, International Journal of Parallel, Emergent and Distributed Systems, 35(2020): 81-90.

\bibitem{18}
J. Liang, X. Liu, D. Lou, Z. Zhang, Z. Qin, Cyclic edge and cyclic vertex connectivity of (4, 5, 6)-fullerene graphs, Discrete Applied Mathematics, 351(2024): 94-104.

\bibitem{19}
T. Tian, S. Zhang, H. Li, Cyclic connectivity and cyclic diagnosability of alternating group graphs, The Computer Journal, 00(2025): 1-11.

\bibitem{20}
M. Heydemann, Cayley graphs and interconnection networks, in: G. Hahn, G. Sabidussi (Eds.), Graph Symmetry, Kluwer Academic Publishers.,Netherlands, (1997): 167-224.
%
\bibitem{21}
S. Lakshmivarahan, J. Jwo, S. Dhall, Symmetry in interconnection networks based on Cayley graphs of permutation groups: a survey, Parallel Computing, 19(1993): 361-407.

\bibitem{22}
X. Yu, X. Huang, Z. Zhang, A kind of conditional connectivity of Cayley graphs generated by unicyclic graphs, Information Sciences, 243(2013): 86-94.

\bibitem{23}
S. Li, J. Tu, C. Yu, The generalized 3-connectivity of star graphs and bubble-sort graphs, Applied Mathematics and Computation, 274(2016): 41-46.

\bibitem{24}
H. Zhang, S. Zhou, E. Cheng, S. Hsieh, Characterization of Cyclic Diagnosability of Regular Diagnosable Networks, IEEE Transactions on Reliability, 73(1)(2024): 270-278.

\bibitem{25}
P. Li, J. Meng, Linearly many faults in Cayley graphs generated by transposition triangle free unicyclic graphs, Theoretical Computer Science, 847 (2020): 95-102.
%
\bibitem{26}
Y. Wang, S. Wang, The 3-good-neighbor connectivity of modified bubble-sort graphs, Mathematical Problems in Engineering, 2020(2020): 7845987.
%
\bibitem{27}
L. Nan, S. Wang, L. Zhao, Fault-tolerant strong Menger connectivity of modified bubble-sort graphs, Theoretical Computer Science, 971(2023): 114060.

\bibitem{28}
A. Sardrouda, M. Ghasemi, S. Zhou, The 1-good-neighbour diagnosability of modified bubblesort graphs under the $PMC$ and $MM^{*}$ models, Theoretical Computer Science, 1057(2025): 115556.

\bibitem{29}
E. Cheng, L. Lipt$\acute{a}$k, Fault resiliency of Cayley graphs generated by transpositions, International Journal of Foundations of Computer Science, 18(2007)
1005-1022.

\end{thebibliography}
\end{document}